\newcommand{\mycommand}[2]{\ifdefined#1\renewcommand{#1}{#2}\else\newcommand{#1}{#2}\fi}
\newcommand{\comment}[1]{}
\newcommand{\N}{\ensuremath{\mathbb N}}
\newcommand{\Z}{\ensuremath{\mathbb Z}}
\newcommand{\R}{\mathbb R}
\newcommand{\Rn}{\ensuremath{\mathbb R^{n}}}
\newcommand{\Rm}{\ensuremath{\mathbb R^{m}}}
\newcommand{\bin}{\{0,1\}}
\mycommand{\mit}{\text{ with }}
\newcommand{\und}{\text{ and }}
\newcommand{\fuer}{\text{ for }}
\newcommand\inv{^{-1}}
\newcommand{\valp}{\p^{*}}
\newcommand{\val}[1]{#1^*}
\mycommand{\valdelta}{\delta^*}
\mycommand{\Hat}{{\ensuremath{H_{a,t}}}}
\mycommand{\Hp}{{H_{a,t}^+}}
\mycommand{\Hm}{{H_{a,t}^-}}
\newcommand{\hours}{{\ensuremath{t\in T}}}
\mycommand{\d}{\ensuremath{\delta}}
\newcommand{\p}{\ensuremath{\pi}}
\mycommand{\poly}{\ensuremath{\mathcal{P}}}
\Satz\renewcommand{\Satz}[1]{Satz \ref{satz:#1}}\else\newcommand{\Satz}[1]{Satz \ref{satz:#1}}\fi
\newcommand{\tp}{^{\top}}
\newcommand{\vallambda}{\lambda^*}
\newcommand{\convex}{_{C}}
\newcommand{\nonconvex}{_{N}}
\numberwithin{algorithm}{section}
\begin{document}

\title{Competitive Equilibrium Relaxations in General Auctions}

\author{Johannes C. M\"uller$^{\rm a}$\vspace{6pt}\\
{\fontsize{10}{13}\selectfont$^{\rm a}$\it
Department of Mathematics,
FAU Erlangen-N\"urnberg,
Cauerstr. 11, D-91058 Erlangen, Germany;}\\
{\fontsize{10}{13}\selectfont\it%
Email: Johannes.Mueller@fau.de;}
}

\ifdefined\hypersetup
  \hypersetup{
    pdfauthor={Johannes C. M\"uller},
    pdfkeywords={game theory, auctions/bidding, integer programming,
       nonlinear programming, MPEC},
    linkcolor=black,
    citecolor=black,
    filecolor=black,
    urlcolor=black,
  }
\fi

\date{\fontsize{8}{13}\selectfont\it(September 3, 2013)}

\maketitle

\begin{abstract}
The goal of an auction is to determine commodity prices
such that all participants are perfectly happy.
Such a solution is called a competitive equilibrium
and does not exist in general. For this reason we are interested 
in solutions which are similar to a competitive equilibrium.

The article introduces two relaxations of a competitive equilibrium for general auctions.
Both relaxations determine one price per commodity 
by solving a difficult non-convex optimization problem.
The first model is a mathematical program with equilibrium constraints (MPEC),
which ensures that each participant is either perfectly happy or his bid is rejected.
An exact algorithm and a heuristic are provided for this model. 
The second model is a relaxation of the first one and 
only ensures that no participant incurs a loss.
In an optimal solution to the second model, no participant can be made 
better off without making another one worse off.
\bigskip\\\textbf{Keywords: }
game theory;
auctions/bidding;
integer programming;
nonlinear programming;
MPEC%
\bigskip\\\textbf{AMS Subject Classifications: }
90C11; 
90C33; 
91A46; 
91B15; 
91B26
\end{abstract}

\section*{Outline}
Section \ref{sec introduction} provides a brief introduction to auctions, bid expression,
strict linear pricing schedules, and surplus maximization.
Section \ref{sec definitions} provides definitions for quasi-linear utility, 
competitive equilibria, and extends these concepts by introducing decision sets,
quantity functions, and decision valuations. With these extensions, it becomes
convenient to apply the definitions to real-world optimization problems.
Section \ref{sec competitive equilibria} introduces the fundamental welfare theorem:
under certain convexity assumptions, the surplus maximizing solution admits a strict 
linear pricing schedule, at which the individual optimization problems of each participant
are maximized. Difficulties arise when some of the bids are non-convex and the
auction determines exactly one price per tradable commodity.
Section \ref{sec competitive relaxations} addresses these difficulties and
introduces two relaxations of a competitive equilibrium.
Section \ref{sec maximization or rejection} proposes a model and an exact algorithm that 
maximizes the economic surplus such that the individual optimization problem of a bid is 
either maximized or the bid is rejected.
Section \ref{sec non-negative surplus} introduces a model that maximizes the economic 
surplus such that no participant incurs a loss. The model computes an efficient solution,
that is, the surplus of one participant cannot be increased without
decreasing the surplus of another participant.
In Section \ref{sec real-world application} we refer to a large scale real-world
application.

\section{Introduction}
\label{sec introduction}

In this chapter we will provide a general definition of an auction.
An auction is coordinated by an \emph{auctioneer}. All auctions have in common 
that there is a non-empty set of \emph{buyers} and a non-empty set of 
\emph{sellers}. Otherwise the outcome of the auction is trivial, as nothing
can be traded. Furthermore, there is a non-empty set of different tradable 
\emph{commodities} and there might exist several interchangeable copies of
each commodity. Examples for commodities are company shares,
futures contracts, electricity at a specific location and time, and so on.
In general it is difficult to distinguish between buyers and sellers,
as there might exist participants who just want to swap different commodities.
Such a participant is a buyer and a seller at once.
For this reason we will use the term \emph{participants}
(also called \emph{bidders}) instead of buyers and sellers.

\subsection{Bid Expression}

The participants need to tell the auctioneer in which commodity bundles
they are interested. In our auction each participant submits
a \emph{bid} that represents his interests. A bid is determined by
the following \emph{bid parameters}:

\begin{defn}
Let $m$ be the number of tradable commodities. The bid of a participant
$i$ is determined by his \emph{bid parameters} $(D_i,v_i,f_i)$, where
\begin{itemize}
 \item $D_i$ is the feasible region of his decision variables
       (hereinafter also referred to as \emph{decision set}),
 \item $v_i:D_i\rightarrow\R$ is his \emph{decision valuation} function
       (benefit if $>0$/ costs if $<0$), and
 \item $f_i:D_i\rightarrow\R^m$ is his \emph{quantity function}.
\end{itemize}
\end{defn}

In Subsection \ref{subsec surplus maximization} 
we will see that these parameters describe individual optimization problems
that return the optimal demanded or supplied
quantities depending on the commodity prices given by the auctioneer.
In \cite{deVries2003} these individual optimization problems are called \emph{oracles}.

Let $(D_i,v_i,f_i)$ be the bid parameters of a participant $i$. If $\delta_i\in D_i$, 
then $\delta_i$ is a feasible decision and $f_i(\delta_i)$ is the associated
quantity vector in $\R^m$. Positive values indicate that the participant
demands the specified amount of a good and negative values
indicate that he supplies the specified amount of a good.
The value $v_i(\delta_i)$ indicates the benefit (or cost) that is associated
with the decision $\delta_i$. If the quantity function $f_i$ is injective on $D_i$,
then it assigns a unique benefit (or cost) to each commodity
bundle in $f_i(D_i)$. However, we do not need to require injectivity.

\subsection{Clearing Condition}

Let $M$ be the set of commodities, $I$ be the set of participants, and
$(D_i,v_i,f_i)$ be the bid parameters of participants $i\in I$.
A solution to an auction must satisfy at least the following two constraints:
\begin{align}
&\sum_{i\in I}f_i(\delta_i)=0\label{clearing condition},\\
&\delta_i\in D_i&&\forall i\in I.
\end{align}
The first equation is called \emph{clearing condition} 
and ensures that for each commodity the bought quantity
minus the sold quantity is equal to zero. The equality sign is important as the
auctioneer is not interested in keeping any goods. In some auctions there is
only one seller and the seller is the auctioneer. In this case we assume 
that the seller and the auctioneer are different parties. The
seller will keep the commodities which are not sold.
The second equation ensures that the decision variables of each participant
are in the respective decision set.

\subsection{Linear Pricing Schedules}

Participants who supply a commodity will only participate in the auction
if they receive money for supplying a commodity. The money is to be collected
from the participants who demand the commodity. 
We will now define a common pricing schedule, which is a multidimensional 
extension of the definition in \cite[p. 136]{tirole1988}.
\begin{defn}
Let $m$ be the number of different commodities. A \emph{pricing schedule}
$T:\R^m\rightarrow\R$ is a map that returns the total amount of money 
to be paid by a participant depending on his consumption vector
$q\in\R^m$. A negative $T(q)$ specifies the amount of money to be received
and $-q_i$ bought units model $q_i$ sold units of commodity $i$.
A pricing schedule is called \emph{linear} if the map is linear,
i.e., $T(q)=\p\tp q$. In this case $\p$ is called a 
\emph{linear price vector} and $\p_i$ is the price per unit for commodity $i$.
\end{defn}
\begin{defn}
A pricing schedule is \emph{strict linear} if it is linear and
the number of commodities $m$ is equal to the number of clearing conditions
in the auction model (\cite{vanvyve2011}).
\end{defn}
An example for a linear pricing schedule which is not strict linear
can be found in \cite{oneill2005}. There, the number of commodities equals the 
number of clearing conditions plus the number of binary variables.

\subsection{Surplus Maximization}\label{subsec surplus maximization}
We assume that the auctioneer decides to implement a strict linear pricing
schedule.
\begin{defn}
Let $T$ be a strict linear pricing schedule and let $(D_i,v_i,f_i)$ be the bid
parameters of participant $i$. Then his \emph{surplus} depending 
on his decision variable $\delta_i\in D_i$ is given by
\[
 v_i(\delta_i)-T(f_i(\delta_i)).
\]
The \emph{individual optimization problem}, which maximizes his surplus, is given by
\begin{align*}
 \max\{v_i(\delta_i)-T(f_i(\delta_i))\mid \delta_i\in D\}.
\end{align*}
A participant is \emph{perfectly happy} if his individual optimization problem is
maximized. The sum of the surpluses of all participants is called 
\emph{economic surplus} or \emph{social welfare}.
\end{defn}
Note, that the surplus is non-negative whenever we have $v_i(\delta_i)\ge T(f_i(\delta_i))$.
In other words if the participant is a buyer, his surplus
is non-negative whenever the benefit is greater or equal
to the amount of money to be paid. If he is a seller, the surplus is non-negative
whenever $-T(f_i(\delta_i))\ge-v_i(\delta_i)$, i.e., whenever the amount of money to be received
is greater or equal to the costs.

Now assume that the auctioneer wants to maximize the economic surplus, subject to
the clearing condition and the feasibility of the decision variables:
\begin{align}
\max\quad&\sum_{i\in I}\left(  v_i(\delta_i)-\p\tp f_i(\delta_i) \right),
  \label{eq surplus max 1}\\
\st\quad
&\sum_{i\in I}f_i(\delta_i)=0,\label{eq surplus max 3}\\
&\delta_i\in D_i,&&\forall i\in I,\\
&\p\in\R^m.\label{eq surplus max 2}
\end{align}

\begin{prop}
The following optimization problem is equal to \eqref{eq surplus max 1}-\eqref{eq surplus max 2}.
\begin{align}
\max\quad&\sum_{i\in I}v_i(\delta_i),\label{eq competitive eq 1}\\
\st\quad
&\sum_{i\in I}f_i(\delta_i)=0,\\
&\delta_i\in D_i,&&\forall i\in I,\\
&\p\in\R^m\label{eq competitive eq 2}.
\end{align}
\end{prop}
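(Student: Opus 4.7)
The plan is to show that on the common feasible region, the two objective functions coincide, and that the feasible regions themselves are identical, so that the two problems have the same optimal value and the same set of optimal solutions (up to the freely-chosen price vector).

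First I would observe that the constraints of the two problems are literally the same: the clearing condition \eqref{eq surplus max 3}, the decision-set membership $\delta_i\in D_i$, and the unrestricted price $\p\in\R^m$ appear in both. Hence any feasible point for one is feasible for the other.

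Next I would exploit the clearing condition to simplify the linear-pricing term in \eqref{eq surplus max 1}. Pulling the constant price vector out of the sum and using \eqref{eq surplus max 3} gives
\begin{equation*}
\sum_{i\in I}\p\tp f_i(\delta_i)\;=\;\p\tp\sum_{i\in I} f_i(\delta_i)\;=\;\p\tp 0\;=\;0
\end{equation*}
for every feasible point. Consequently the objective \eqref{eq surplus max 1} reduces to $\sum_{i\in I} v_i(\delta_i)$ on the feasible region, which is exactly the objective \eqref{eq competitive eq 1}. Since the feasible regions coincide and the objectives agree pointwise on them, the two problems have the same supremum and the same optimizers in the variables $(\delta_i)_{i\in I}$.

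There is essentially no obstacle here; the only subtlety worth flagging explicitly is the role of $\p$. In problem \eqref{eq surplus max 1}--\eqref{eq surplus max 2} the price vector is a genuine decision variable, but because its contribution to the objective vanishes on the feasible set and it appears in no constraint, it is completely unconstrained at optimality. Thus \eqref{eq competitive eq 1}--\eqref{eq competitive eq 2} may be regarded as exactly the same problem after the trivial elimination of $\p$ from the objective, which proves the equivalence.
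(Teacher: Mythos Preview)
Your proof is correct and follows essentially the same approach as the paper: use the clearing condition \eqref{eq surplus max 3} to show that the term $\p\tp\sum_{i\in I}f_i(\delta_i)$ vanishes, so that the two objectives agree on the common feasible region. Your version is slightly more explicit about the identical feasible sets and the dummy role of $\p$, but the argument is the same.
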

\begin{proof}
Let $(d,\p)$ be a feasible solution to \eqref{eq surplus max 1}-\eqref{eq surplus max 2}.
Equation \eqref{eq surplus max 3} yields that the objective is given by
\[
 \sum_{i\in I}\left(  v_i(\delta_i)-\p\tp f_i(\delta_i) \right)
= \sum_{i\in I} v_i(\delta_i) - \sum_{i\in I} \p\tp f_i(\delta_i)
= \sum_{i\in I} v_i(\delta_i) - \p\tp \sum_{i\in I} f_i(\delta_i)
= \sum_{i\in I} v_i(\delta_i).
\]
\end{proof}

Observe that $\p$ can be chosen arbitrarily. The economic surplus is not depending on
the values of $\pi$. If we choose arbitrary prices, then some of the participants
might \emph{incur a loss}, that is, they have a negative surplus.
Section \ref{sec definitions} introduces the competitive equilibrium,
a situation where all participants are perfectly happy. In particular no
participant incurs a loss. Section \ref{sec competitive equilibria} shows
that a competitive equilibrium exists if the model \eqref{eq competitive eq 1}%
-\eqref{eq competitive eq 2} is convex.
Section \ref{sec competitive relaxations} addresses the non-convex case.
There, an optimal solution to \eqref{eq competitive eq 1}-\eqref{eq competitive eq 2}
does not necessarily posses a strict linear pricing schedule where no participant
incurs a loss.

\section{Economic Definitions and Generalizations}
\label{sec definitions}
\def\s{^*}

\begin{defn}
Let $I$ be a finite set of auction participants
and let $L$ be a finite set of tradable commodities.
The first element in $L$ will be called \emph{numéraire commodity} and represents money.
$T:=L\setminus\{0\}$ is the set of tradable commodities without the numéraire commodity.
A set $X_i\subseteq\R^{L}$ is called a \emph{quantity set} of participant $i$
(also called \emph{consumption set} / \emph{production set}).
A positive entry in a quantity vector $x_i\in X_i$ indicates that participant $i$
receives the specified amount of that good, and 
a negative entry indicates that he gives away the specified amount of that good.
A function $u_i:X_i\rightarrow\R$ is called a \emph{utility function} of participant $i$.

If a utility function has the form $u_i(w,q)=w+\Phi_i(q)$ with $\Phi_i:\R^T\rightarrow\R$,
then it is called a \emph{quasi-linear utility function}. The function $\Phi_i$
is called \emph{valuation} (also called \emph{benefit} if $\ge0$ / \emph{cost} if $\le0$).

These definitions are consistent with \cite{mascolell1995} and \cite{blumrosenNisan2007}.
\end{defn}
\begin{prop}\label{prop:willingness}
Let $i$ be a consumer and let $u_i(w,q)=w+\Phi_i(q)$ be his quasi-linear utility function.
The \emph{willingness to pay} function $M_i:\R^T\rightarrow\R$ that returns the amount
of money the consumer is willing to pay for the consumption of the bundle $q\in\R^T$
is defined as
\[
u_i(w_0-M_i(q),q)=u_i(w_0,0).
\]
If $u_i$ is quasi-linear and $\Phi(0)=0$, then we have $M_i(q)=\Phi_i(q)$ for all $q\in\R^T$,
i.e., the willingness to pay is equal to the benefit.
\end{prop}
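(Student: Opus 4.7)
The plan is to carry out a direct substitution into the defining equation of $M_i$ using the quasi-linear form of $u_i$. Since $u_i(w,q) = w + \Phi_i(q)$, I will first expand the left-hand side of the definition:
\[
u_i(w_0 - M_i(q), q) = (w_0 - M_i(q)) + \Phi_i(q).
\]
Next I will expand the right-hand side, using the hypothesis $\Phi_i(0) = 0$:
\[
u_i(w_0, 0) = w_0 + \Phi_i(0) = w_0.
\]

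I will then equate the two expressions, which gives $w_0 - M_i(q) + \Phi_i(q) = w_0$. Cancelling $w_0$ from both sides and solving for $M_i(q)$ yields the desired identity $M_i(q) = \Phi_i(q)$, valid for every $q \in \R^T$. No obstacles are anticipated; the argument is a one-line algebraic manipulation, and the only subtlety worth flagging is that the assumption $\Phi_i(0)=0$ is exactly what is needed to collapse the right-hand side, so the proof will highlight where this hypothesis is used.
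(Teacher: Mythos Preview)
Your argument is correct: direct substitution of the quasi-linear form into the defining equation, followed by the use of $\Phi_i(0)=0$ and cancellation of $w_0$, is exactly what is needed. The paper itself states this proposition without proof, treating the claim as immediate; your write-up supplies precisely the one-line computation that is being left implicit.
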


\begin{defn}\cite[Def. 10.B.1]{mascolell1995}
An \emph{allocation} $(x_i)_{i\in I}$ is a tuple of
quantity vectors $x_i\in X_i$ for each participant (buyer/seller) $i\in I$.
\end{defn}

\begin{defn}\cite[Def. 10.B.3]{mascolell1995}\label{def competitive}
An allocation $(x_i^*)_{i\in I}$ and a price vector $p^*\in\R^L$
constitute a \emph{competitive (Walrasian) equilibrium} with respect to
a \emph{strict linear pricing schedule} if the following conditions hold.
\begin{itemize}
\item Utility maximization for each participant:
\begin{equation}
\forall i\in I:\qquad x^*_i\in\arg\max\left\{u_i(x_i)\mid x_i\in X_i, (p^*)\tp x_i\le 0\right\}
\end{equation}
\item Clearing condition for each commodity:
\begin{equation}
\forall l\in L:\qquad \sum_{i\in I} x^*_{i,l} = 0
\end{equation}
\end{itemize}
The constraint $(p^*)\tp x_i\le 0$ is called the \emph{budget constraint}
of participant $i$.
Note that the clearing condition of money must hold in a competitive equilibrium
because it is also a commodity in $L$.
\end{defn}

\begin{prop}\label{prop:strict:competitive:equilibrium}
Let the utility functions of all participants $i\in I$ be quasi-linear,
i.e., $u_i(w,q)=w+\Phi_i(q)$.
Let the quantity sets of all participants $i\in I$ be given by $X_i=\R\times Q_i$
with $Q_i\subseteq\R^T$.
Let the price of money (the numéraire commodity) amount to $p^*_0=1$ currency unit.

Then the allocation $\left((w_i^*,q_i^*)\right)_{i\in I}$ and
the price vector $(1,p^*_T)\in\R^L$ constitute a \emph{competitive equilibrium}
with respect to a \emph{strict linear pricing schedule}
if and only if the following conditions hold.
\begin{itemize}
\item Utility maximization for each participant:
\begin{align}
&\forall i\in I:\qquad q^*_i\in\arg\max\left\{\Phi_i(q_i)-(p^*_T)\tp q_i \mid q_i\in Q_i\right\}
\label{eq:b3}\\
&\forall i\in I:\qquad w^*_i=-(p^*_T)\tp q^*_i\label{eq:linear:prices}
\end{align}
\item Clearing condition for each commodity:
\begin{align}
&\hphantom{\forall t\in T:\qquad} \sum_{i\in I} w^*_i = 0\label{eq:cc:money}\\
&\forall t\in T:\qquad \sum_{i\in I} q^*_{i,t} = 0\label{eq:cc:goods}
\end{align}
\end{itemize}
If \eqref{eq:linear:prices} and \eqref{eq:cc:goods} are satisfied,
then the clearing condition of money \eqref{eq:cc:money} is also satisfied.
\end{prop}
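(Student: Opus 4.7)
The plan is to prove the two directions of the equivalence by unpacking Definition \ref{def competitive} under the quasi-linear hypothesis, and then to verify the final implication about money clearing by a short algebraic step.

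First I would establish the forward direction. Assuming $\left((w^*_i,q^*_i)\right)_{i\in I}$ and $p^* = (1,p^*_T)$ form a competitive equilibrium, the budget constraint reads $(p^*)^\top x_i = w_i + (p^*_T)^\top q_i \le 0$. The utility $u_i(w_i,q_i) = w_i + \Phi_i(q_i)$ is strictly increasing in $w_i$, so any utility maximizer must saturate the budget constraint; this immediately gives $w^*_i = -(p^*_T)^\top q^*_i$, which is \eqref{eq:linear:prices}. Substituting this into the utility and into the budget set shows that the outer maximization over $X_i = \R \times Q_i$ reduces to $q^*_i \in \arg\max\{\Phi_i(q_i) - (p^*_T)^\top q_i \mid q_i \in Q_i\}$, establishing \eqref{eq:b3}. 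The two clearing conditions \eqref{eq:cc:money} and \eqref{eq:cc:goods} are nothing but the clearing conditions of Definition \ref{def competitive} rewritten coordinate-wise over $L = \{0\} \cup T$.

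For the reverse direction, I would fix $i \in I$ and pick any $(w_i,q_i) \in X_i$ satisfying $w_i + (p^*_T)^\top q_i \le 0$. Then
\[
u_i(w_i,q_i) \;=\; w_i + \Phi_i(q_i) \;\le\; -(p^*_T)^\top q_i + \Phi_i(q_i) \;\le\; -(p^*_T)^\top q^*_i + \Phi_i(q^*_i) \;=\; u_i(w^*_i,q^*_i),
\]
where the first inequality uses the budget constraint, the second uses \eqref{eq:b3}, and the final equality uses \eqref{eq:linear:prices}. Together with the clearing conditions \eqref{eq:cc:money} and \eqref{eq:cc:goods} this reproduces all conditions of Definition \ref{def competitive} with price vector $(1,p^*_T)$.

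Finally, to see that \eqref{eq:linear:prices} and \eqref{eq:cc:goods} imply the money clearing condition, I would just sum \eqref{eq:linear:prices} over participants:
\[
\sum_{i\in I} w^*_i \;=\; -(p^*_T)^\top \sum_{i\in I} q^*_i \;=\; 0.
\]
Because every step reduces to exploiting the strict monotonicity of quasi-linear utility in the numéraire and to a direct substitution, I do not expect any genuine obstacle here; the only point that requires care is making explicit that tightness of the budget constraint is what licenses the reduction from $u_i$ on $X_i$ to $\Phi_i - (p^*_T)^\top q_i$ on $Q_i$, since without this reduction the equivalence of the two formulations of utility maximization would not be transparent.
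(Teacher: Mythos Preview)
Your proposal is correct and follows essentially the same route as the paper. The only cosmetic difference is in the forward direction: where you invoke strict monotonicity of $u_i$ in $w_i$ to conclude that the budget constraint binds, the paper instead compares $(w^*_i,q^*_i)$ with the feasible point $(-(p^*_T)^\top q^*_i,\,q^*_i)$ and sandwiches to obtain $w^*_i=-(p^*_T)^\top q^*_i$; these are two phrasings of the same observation.
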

\begin{proof}
The utility maximization problem of each participant is given by
\begin{align}
\max\quad&w_i+\Phi_i(q_i)\label{eq:b7},\\
\st\quad&p^*_0 w_i + (p^*_T)\tp q_i\le 0\label{eq:budget},\\
        &q_i\in Q_i,\\
        &w_i\in\R\label{eq:b10},
\end{align}
where $p^*_0=1$. At first we will show, that an optimal solution to 
\eqref{eq:b7}-\eqref{eq:b10} satisfies \eqref{eq:b3}-\eqref{eq:linear:prices}.
Let $(w\s_i,q\s_i)$ be an optimal solution to \eqref{eq:b7}-\eqref{eq:b10},
then $(-(p^*_T)\tp q\s_i,\;q\s_i)$ is a feasible solution to the same problem
and we have
\begin{align*}
&w\s_i+\Phi_i(q\s_i)
  &&\mid (w\s_i,q\s_i)\text{ maximizes \eqref{eq:b7}-\eqref{eq:b10}},\\
&\ge -(p^*_T)\tp q\s_i+\Phi_i(q\s_i)
  &&\mid (w\s_i,q\s_i)\text{ satisfies \eqref{eq:budget}: }-(p^*_T)\tp q\s_i\ge w\s_i,\\
&\ge w\s_i+\Phi_i(q\s_i).
\end{align*}
As the terms in the first and last row are equal, the inequalities are
satisfied with equality. This yields that $w\s_i=-(p^*_T)\tp q\s_i$,
thus \eqref{eq:linear:prices} is satisfied.
Let $q_i\in Q_i$ be arbitrary, then $(-(p^*_T)\tp q_i,\;q_i)$
is feasible to \eqref{eq:b7}-\eqref{eq:b10} and we have
\begin{align*}
&\Phi_i(q\s_i)-(p^*_T)\tp q\s_i
   &&\mid (w\s_i,q\s_i)\text{ satisfies \eqref{eq:linear:prices}},\\
&= \Phi_i(q\s_i)+w\s_i
   &&\mid (w\s_i,q\s_i)\text{ maximizes \eqref{eq:b7}-\eqref{eq:b10}},\\
&\ge \Phi_i(q_i)-(p^*_T)\tp q_i
\end{align*}
This yields that $q\s_i$ is an optimal solution to 
$\max\left\{\Phi_i(q_i)-(p^*_T)\tp q_i \mid q_i\in Q_i\right\}$,
thus it satisfies \eqref{eq:b3}.

We will now show, that a solution that satisfies \eqref{eq:b3}-\eqref{eq:linear:prices}
is an optimal solution to \eqref{eq:b7}-\eqref{eq:b10}.
Let $q\s_i$ be an optimal solution to 
$\max\left\{\Phi_i(q_i)-(p^*_T)\tp q_i \mid q_i\in Q_i\right\}$
and let $w\s_i=-(p^*_T)\tp q\s_i$. Let $(w_i,q_i)$ be a feasible solution
to \eqref{eq:b7}-\eqref{eq:b10}, then we have
\begin{align*}
&\Phi_i(q\s_i)+w\s_i\\
&=\Phi_i(q\s_i)-(p^*_T)\tp q\s_i
  &&\mid q\s_i\text{ satisfies \eqref{eq:b3}},\\
&\ge\Phi_i(q_i)-(p^*_T)\tp q_i
  &&\mid (w_i,q_i)\text{ satisfies \eqref{eq:budget}:} -(p^*_T)\tp q_i\ge w_i,\\
&\ge\Phi_i(q_i)+w_i.
\end{align*}
This yields, that $(w\s_i,q\s_i)$ is an optimal solution to \eqref{eq:b7}-\eqref{eq:b10}.

Let \eqref{eq:linear:prices} and \eqref{eq:cc:goods} be satisfied, then
\[
\sum_{i\in I} w^*_i = -\sum_{i\in I}(p^*_T)\tp q^*_i
 = -\sum_{i\in I}\sum_{t\in T} {p^*_t} q^*_{i,t}
 = -\sum_{t\in T} {p^*_t} \sum_{i\in I} q^*_{i,t} = 0.
\]
\end{proof}

\subsection{Decision Sets and Quantity Functions}

In many cases it is convenient to express the valuation function $\Phi_i$
(cost $\le0$ / benefit $\ge0$) with the help of decision variables
(control variables) instead of quantity variables.
For this reason we introduce the concept of quantity functions
and valuation functions depending on decision variables.

Again, let $L$ be the set of tradeable commodities including the numéraire
and let $T:=L\setminus\{0\}$ be the set of commodities without the numéraire.

\begin{defn}
Let $i$ be a participant with a quasi-linear utility function $u_i(w,q)=w+\Phi_i(q)$
and let his quantity set be given by $X_i=\R\times Q_i$ with $Q_i\subseteq\R^T$.
If there is a set $D_i\subseteq\R^{n_i}$, a function $f_i:D_i\rightarrow\R^T$
and a function $v_i:D_i\rightarrow\R$ such that
\begin{align*}
Q_i&=f_i(D_i)\quad\und\\
\forall q\in Q_i:\quad 
\Phi_i(q)&=\max\{ v_i(\delta) \mid f_i(\delta)=q,\; \delta\in D_i \}.
\end{align*}
then $D_i$ is called \emph{decision set}, $f_i$ is called \emph{quantity function},
and $v_i$ is called \emph{decision valuation} of participant $i$.
\end{defn}

In \cite{blumrosenNisan2007} the evaluation of the quantity-parameterized 
optimization problem $\Phi_i(q)=\max\{ v_i(\delta) \mid f_i(\delta)=q,\; \delta\in D_i \}$
is called a \emph{value query}: ``The auctioneer presents a bundle $q$,
the bidder reports his value $\Phi_i(q)$ [in numéraire units] for this
bundle''. In a value query the participant has to choose his decision
variables such that he can buy / sell exactly the specified quantities
($f_i(\delta)=q$). Subject to this constraint he will
choose his decision variables such that they maximize an individual 
valuation function $v_i$ depending on his decision variables.
Then he will return the optimal value to the auctioneer.
For example a producer will try to find the cheapest production schedule
to produce the quantities $q$. Then he will return the production costs ($<0$)
to the auctioneer.

\begin{prop}\label{prop:strict:competitive:equilibrium:D}
Let the utility functions of all participants $i\in I$ be quasi-linear,
i.e., $u_i(w,q)=w+\Phi_i(q)$.
Let the quantity sets of all participants $i\in I$ be given by $X_i=\R\times Q_i$
with $Q_i\subseteq\R^T$. Let $D_i\subseteq\R^{n_i}$ be the decision set,
$f_i$ the quantity set, and $v_i$ the decision valuation of participant $i\in I$, that is,
\begin{align*}
Q_i&=f_i(D_i)\quad\und\\
\forall q\in Q_i:\quad \Phi_i(q)&=\max\{ v_i(\delta) \mid f_i(\delta)=q,\; \delta\in D_i \}.
\end{align*}
Let the price of money (the numéraire commodity) amount to $p^*_0=1$ currency unit.

Then the allocation $\left((w_i^*,q_i^*)\right)_{i\in I}$ and
the price vector $(1,p^*_T)\in\R^L$ constitute a \emph{competitive equilibrium}
with respect to a \emph{strict linear pricing schedule}
if and only if the following conditions hold.
\begin{itemize}
\item Utility maximization for each participant:
\begin{align}
&\forall i\in I:\qquad \valdelta_i\in\arg\max\left\{v_i(\delta_i)-(p^*_T)\tp f_i(\delta_i)\mid \delta_i\in D_i\right\}
\label{eq:17}\\
&\forall i\in I:\qquad q^*_i=f_i(\valdelta_i)
\label{eq:18}\\
&\forall i\in I:\qquad w^*_i=-(p^*_T)\tp q^*_i\label{eq:D:linear:prices}
\end{align}
\item Clearing condition for each commodity:
\begin{align}
&\hphantom{\forall t\in T:\qquad} \sum_{i\in I} w^*_i = 0\label{eq:D:cc:money}\\
&\forall t\in T:\qquad \sum_{i\in I} q^*_{i,t} = 0\label{eq:D:cc:goods}
\end{align}
\end{itemize}
Remember that the clearing condition of money \eqref{eq:D:cc:money}
is redundant and can be omitted.
\end{prop}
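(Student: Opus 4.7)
The plan is to reduce this statement to Proposition \ref{prop:strict:competitive:equilibrium} by showing that the utility maximization conditions \eqref{eq:17}--\eqref{eq:18} are equivalent to \eqref{eq:b3}. The clearing conditions \eqref{eq:D:cc:money} and \eqref{eq:D:cc:goods} and the pricing identity \eqref{eq:D:linear:prices} are literally the same as \eqref{eq:cc:money}, \eqref{eq:cc:goods}, and \eqref{eq:linear:prices}, and the redundancy of \eqref{eq:D:cc:money} follows directly from Proposition \ref{prop:strict:competitive:equilibrium}. So the only real work is the translation between $(q_i^*)$ in the $\Phi_i$-formulation and $(\valdelta_i)$ in the $(v_i,f_i)$-formulation.

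For the ``only if'' direction, I would start with a competitive equilibrium in the sense of Proposition \ref{prop:strict:competitive:equilibrium}, so $q_i^* \in \arg\max\{\Phi_i(q_i) - (p^*_T)\tp q_i \mid q_i \in Q_i\}$. Since $q_i^* \in Q_i = f_i(D_i)$ and $\Phi_i(q_i^*)$ is defined as the maximum of $v_i(\delta)$ over $\{\delta \in D_i : f_i(\delta) = q_i^*\}$, the maximum is attained by some $\valdelta_i$ with $f_i(\valdelta_i) = q_i^*$ and $v_i(\valdelta_i) = \Phi_i(q_i^*)$. For any $\delta \in D_i$, setting $q = f_i(\delta) \in Q_i$ gives $v_i(\delta) \le \Phi_i(q)$ by definition, so
\begin{align*}
v_i(\delta) - (p^*_T)\tp f_i(\delta) &\le \Phi_i(q) - (p^*_T)\tp q \\
&\le \Phi_i(q_i^*) - (p^*_T)\tp q_i^* = v_i(\valdelta_i) - (p^*_T)\tp f_i(\valdelta_i),
\end{align*}
which establishes \eqref{eq:17}, and \eqref{eq:18} holds by construction.

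For the ``if'' direction, assume $\valdelta_i$ satisfies \eqref{eq:17} and set $q_i^* = f_i(\valdelta_i)$. I first argue $v_i(\valdelta_i) = \Phi_i(q_i^*)$: for any $\delta'$ with $f_i(\delta') = q_i^*$, optimality of $\valdelta_i$ gives $v_i(\valdelta_i) - (p^*_T)\tp q_i^* \ge v_i(\delta') - (p^*_T)\tp q_i^*$, hence $v_i(\valdelta_i) \ge v_i(\delta')$, so $v_i(\valdelta_i)$ attains the maximum defining $\Phi_i(q_i^*)$. Then for any $q \in Q_i$, pick $\delta$ attaining $\Phi_i(q) = v_i(\delta)$ with $f_i(\delta) = q$; applying \eqref{eq:17} yields $\Phi_i(q) - (p^*_T)\tp q = v_i(\delta) - (p^*_T)\tp f_i(\delta) \le v_i(\valdelta_i) - (p^*_T)\tp q_i^* = \Phi_i(q_i^*) - (p^*_T)\tp q_i^*$, which is exactly \eqref{eq:b3}.

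The only subtle point is the existence of attaining arguments $\valdelta_i$ and $\delta$ inside the definition of $\Phi_i$, but this is built into the statement of the proposition, since $\Phi_i$ is defined with $\max$ (not $\sup$), meaning the maximum is assumed to be attained on each fiber $\{f_i = q\}$. Once this is granted, the rest is a pure bookkeeping argument and the main proposition in the previous subsection takes care of everything else.
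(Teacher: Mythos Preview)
Your proof is correct and follows the same overall strategy as the paper: reduce to Proposition~\ref{prop:strict:competitive:equilibrium} by establishing that \eqref{eq:b3} is equivalent to \eqref{eq:17}--\eqref{eq:18}. The only difference is one of packaging: the paper first abstracts the argmax equivalence into a general statement (Theorem~\ref{thm:decision:space} and Corollary~\ref{cor:13}, which say $\arg\max_{q\in Q}\Phi(q)-g(q)=f(\arg\max_{\delta\in D}v(\delta)-g(f(\delta)))$ for any $g$) and then simply applies that corollary with $g(q)=(p^*_T)\tp q$, whereas you prove the special case directly inline. Your inline argument is exactly the specialization of their Corollary~\ref{cor:13} to linear $g$, so the mathematical content is identical; the paper's version has the advantage of being reusable, while yours is self-contained.
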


Before we will proof this proposition we present a model transformation
that holds in a general setting. Similar to extended formulations
the transformation allows us to express the feasible region 
(quantity set) with the help of additional (or less) variables.
The transformation also expresses the objective with
the help of these additional variables. This is extremely useful
as it allows us to model certain non-linear objectives with
the help of mixed-integer-formulations.

\newcommand\capD{}
\begin{thm}\label{thm:decision:space}
Let $Q\subseteq\R^m$ and $D\subseteq\R^n$ be arbitrary sets and let $\Phi:Q\rightarrow\R$,
$v:D\rightarrow\R$, and $f:D\rightarrow Q$ be arbitrary functions with
\begin{align*}
f(D)&=Q,\\
\forall q\in Q:\quad \Phi(q)&=\max\{ v(\delta) \mid \delta\in f\inv(q)\capD \},
\end{align*}
where $f\inv(q):=\{\delta\in D\mid f(\delta)=q\}$ is the \emph{fiber} of $f$ at $q$.
Then we have
\begin{align*}
\max_{q\in Q}\Phi(q)&=\max_{\delta\in D}v(\delta),\\
\arg\max_{q\in Q}\Phi(q)&=f\left( \arg\max_{\delta\in D}v(\delta) \right).
\end{align*}
The next figure visualizes the relationship between the different sets and functions.
\[
\newcommand\tmpB{\argmax v(D)}
\newcommand\tmpA{\argmax \Phi(Q)}
\newcommand\tmpC{\max\Phi(Q)=\max v(D)}
\xymatrix{
Q \ar[dd]|{\argmax\Phi(\cdot)}\ar[dr]|{\max{\Phi(\cdot)}}          &    &
D \ar[dd]|{\argmax v(\cdot)}  \ar[dl]|{\max v(\cdot)}     \ar[ll]|f\\
                 & \tmpC &                         \\
\tmpA\ar[ur]|\Phi&       & \tmpB\ar[ll]|f\ar[lu]|v
 }
\]
\end{thm}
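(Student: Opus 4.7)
The plan is to exploit the fact that $f$ partitions $D$ into fibers $\{f^{-1}(q) : q \in Q\}$, so maximizing $v$ over $D$ can be decomposed as first maximizing over each fiber (which yields $\Phi(q)$ by hypothesis) and then maximizing over the choice of fiber. Throughout, I will read "max" as "supremum attained", since the hypothesis $\Phi(q)=\max\{v(\delta)\mid \delta\in f^{-1}(q)\cap D\}$ already presupposes attainment on each fiber; the statement of the theorem assumes attainment of the outer maxima as well.

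First I would prove the equality of optimal values. For the inequality $\max_D v \le \max_Q \Phi$, pick any $\delta\in D$ and set $q:=f(\delta)\in Q$; since $\delta\in f^{-1}(q)\cap D$, the definition of $\Phi$ gives $v(\delta)\le \Phi(q)\le\max_Q \Phi$. For the reverse inequality, pick $q\in Q$; by definition of $\Phi$ as a maximum, there exists $\delta\in f^{-1}(q)\cap D\subseteq D$ with $v(\delta)=\Phi(q)$, so $\Phi(q)\le\max_D v$. Taking the supremum over $q$ yields $\max_Q \Phi \le \max_D v$.

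Next I would handle the argmax identity by showing both inclusions. For $f(\arg\max_D v)\subseteq \arg\max_Q \Phi$, take $\delta^*\in\arg\max_D v$ and set $q^*:=f(\delta^*)$. Then $\Phi(q^*)\ge v(\delta^*)=\max_D v=\max_Q\Phi$ by the value equality, and since trivially $\Phi(q^*)\le\max_Q\Phi$, we get $q^*\in\arg\max_Q\Phi$. For the reverse inclusion, take $q^*\in\arg\max_Q\Phi$; by attainment of the inner max there is $\delta^*\in f^{-1}(q^*)\cap D$ with $v(\delta^*)=\Phi(q^*)=\max_Q\Phi=\max_D v$, so $\delta^*\in\arg\max_D v$ and $q^*=f(\delta^*)\in f(\arg\max_D v)$.

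The only real subtlety is the bookkeeping about attainment of maxima, which can be glossed by the convention already implicit in the statement; everything else is a direct unwinding of the definitions. No topological or convexity assumptions are needed, which is precisely why the theorem is useful later for modeling nonconvex valuations via decision variables.
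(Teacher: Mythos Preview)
Your proof is correct and follows essentially the same fiber-decomposition argument as the paper; the only difference is organizational: you establish the equality of optimal values first and then use it to prove both argmax inclusions, whereas the paper proves the two argmax inclusions directly and only afterwards deduces the equality of optimal values. Both orderings amount to the same unwinding of definitions, and your remark about the implicit attainment convention is apt.
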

\begin{proof}
In the following the sets of optimal solutions will be denoted by
$Q\s:=\arg\max_{q\in Q}\Phi(q)$ and $D\s:=\arg\max_{\delta\in D}v(\delta)$.\\
\framebox{$Q\s\subseteq f(D\s)$}
Let $q\s\in Q\s$, that is, $\Phi(q\s)\ge\Phi(q)$ for all $q\in Q$.
The function $\Phi$ is finite at all points $q\in Q$. In particular $\Phi(q\s)$
is finite and there exists $\valdelta\in f\inv(q\s)\capD$ with $\Phi(q\s)=v(\valdelta)$.
We want to show that $\valdelta$ is in $D\s$. Let $\delta\in D$ be arbitrary, then
$\delta\in f\inv(f(\delta))\capD$ holds.
\[
v(\delta) \explain{\le}{\delta\in f\inv(f(\delta))\capD}
\vect{\max&v(\delta') \hfill\\\hfill \st&\delta'\in f\inv(f(\delta))\capD}
=\Phi(f(\delta)) \explain{\le}{f(\delta)\in Q}
\Phi(q\s)=v(\valdelta).
\]
Now we have $\valdelta\in D\s$.\\
\framebox{$Q\s\supseteq f(D\s)$} Let $\valdelta\in D\s$, that is, $v(\valdelta)\ge v(\delta)$ 
for all $\delta\in D$. We know that $f(\valdelta)\in Q$ and we want to show that it is in $Q\s$.
Let $q\in Q$ be arbitrary.
\begin{align*}
\Phi( f(\valdelta) )
&=\vect{\max& v(\delta) \hfill\\\hfill \st& \delta\in f\inv( f(\valdelta) )\capD}
\explain{=}{ \valdelta\in f\inv( f(\valdelta) )\capD }
\vect{\max& v(\delta) \hfill\\\hfill \st& \delta\in D}
\ge\vect{\max& v(\delta) \hfill\\\hfill \st& \delta\in f\inv(q)\capD}
\\
&=\Phi(q).
\end{align*}
Now we have $f(\valdelta)\in Q\s$.\\
\framebox{$\max_{q\in Q}\Phi(q)=\max_{\delta\in D}v(\delta)$}
Let $\valdelta\in D\s$ and $q\s\in Q\s$, then $f(\valdelta)\in Q\s$.
\[
\Phi(q\s)
\explain{=}{f(\valdelta)\in Q\s}
\Phi(f(\valdelta))
=\vect{\max& v(\delta) \hfill\\\hfill \st& \delta\in f\inv( f(\valdelta) )\capD}
\explain{=}{ \valdelta\in f\inv( f(\valdelta) )\capD }
\vect{\max& v(\delta) \hfill\\\hfill \st& \delta\in D}
=v(\valdelta).
\]
\end{proof}

\begin{cornr}\label{cor:13}
Let $Q\subseteq\R^m$ and $D\subseteq\R^n$ be arbitrary sets and let $\Phi:Q\rightarrow\R$,
$v:D\rightarrow\R$, $f:D\rightarrow Q$, and $g:Q\rightarrow\R$ be arbitrary functions with
\begin{align*}
f(D)&=Q,\\
\forall q\in Q:\quad \Phi(q)&=\max\{ v(\delta) \mid \delta\in f\inv(q)\capD \},
\end{align*}
where $f\inv(q):=\{\delta\in D\mid f(\delta)=q\}$ is the \emph{fiber} of $f$ at $q$.
Then we have
\begin{align}
\max_{q\in Q}\Phi(q)-g(q)&=\max_{\delta\in D}v(\delta)-g(f(\delta)),\\
\arg\max_{q\in Q}\Phi(q)-g(q)&=f\left( \arg\max_{\delta\in D}v(\delta)-g(f(\delta)) \right).
\label{eq:23}
\end{align}
\end{cornr}
\begin{proof}
Let $\widetilde\Phi(q):=\Phi(q)-g(q)$ and $\widetilde v(\delta):=v(\delta)-g(f(\delta))$.
For all $q\in Q$ we have
\begin{align*}
\widetilde\Phi(q)&=\Phi(q)-g(q)\\
	&=\vect{\max &v(\delta) \hfill\\\hfill \st & \delta\in f\inv(q)\capD} - g(q)
&&|\;g(q) \text{ is not depending on }\delta,\\
	&=\vect{\max &v(\delta)-g(q) \hfill\\\hfill \st & \delta\in f\inv(q)\capD}
&&|\;q=f(\delta)\text{ holds for all }\delta\in f\inv(q)\capD,\\
	&=\vect{\max &\widetilde v(\delta) \hfill\\\hfill \st & \delta\in f\inv(q)\capD}.
\end{align*}
We can now apply Theorem \ref{thm:decision:space} to the functions
$\widetilde\Phi$, $\widetilde v$, and $f$.
\end{proof}

\begin{proof}[Proof of Prop. \ref{prop:strict:competitive:equilibrium:D}]
The notation of Proposition \ref{prop:strict:competitive:equilibrium:D} is 
consistent with the notation of Corollary \ref{cor:13}. We only need to
define the function $g(q):=(p^*_T)\tp q$. The left hand side of equation 
\eqref{eq:23} is equivalent to equation \eqref{eq:b3} whereas the right
hand side of \eqref{eq:23} is equivalent to \eqref{eq:17} and \eqref{eq:18}.
\end{proof}

We are now ready to introduce a succinct definition of a competitive
equilibrium that only involves decision sets and quantity functions.
The definition is consistent with the previous definitions and propositions.
In particular it is consistent with Proposition \ref{prop:strict:competitive:equilibrium:D}.

\begin{defn}
Let $T$ be the set of tradable commodities without the numéraire.
Let $D_i\subseteq\R^{n_i}$ be the decision set, $f_i:D_i\rightarrow\R^T$
the quantity function, and $v_i:D_i\rightarrow\R$ the decision valuation of
participant $i\in I$.
Remember that participant $i$ has a quasi linear utility function.

Then the tuple $\left(\valdelta\right)_{i\in I}$ and
the price vector $p^*\in\R^T$ constitute a \emph{competitive equilibrium}
with respect to a \emph{strict linear pricing schedule}
if and only if the following three conditions hold.
\begin{itemize}
\item Utility maximization for each participant:
\begin{align}
&\forall i\in I:\qquad \valdelta_i\in\arg\max\left\{v_i(\delta_i)-(p^*)\tp f_i(\delta_i)\mid \delta_i\in D_i\right\}
\label{eq:walras:individual}
\end{align}
\item Strict Linear Pricing Schedule:
\begin{align}
\forall i\in I:\qquad w^*_i:=-(p^*)\tp f_i(\valdelta_i)\label{eq:25}
\end{align}
\item Clearing condition for each commodity in $T$:
\begin{align}
&\sum_{i\in I} f_i(\valdelta_i) = 0\label{eq:26}
\end{align}
\end{itemize}
The term $f_i(\valdelta_i)$ is the quantity vector of participant $i$ and 
$w^*_i$ is the amount of money to be payed ($<0$) or received ($>0$).
The clearing condition of money
$\sum_{i\in I} w^*_i = 0$ is implied by \eqref{eq:25} and \eqref{eq:26}.
\end{defn}

\section{Competitive Equilibria in Convex Auctions}
\label{sec competitive equilibria}

In this section we study a fundamental property of convex auctions.
A \emph{convex auction} is an auction where all participants submit
convex bid parameters. If the auction is convex then we are able to find
a competitive equilibrium by just solving a convex optimization problem.

In the previous sections the individual optimization problems of the
participants where expressed with the help of the bid parameters
$(D_i,v_i,f_i)$: the decision set, the decision valuation, and the quantity
function. This time we will assume that all participants express their bids
by submitting convex bid parameters:

\begin{defn}
Let $T$ be the set of commodities without the numéraire.
The bid parameters $(D_i,v_i,f_i)$ of a participant $i$ are 
\emph{convex} if the decision valuation $v_i:D_i\rightarrow\R$
is concave and differentiable, the quantity function $f_i:D_i\rightarrow\R^T$
is affine and the decision set is given by 
\begin{align*}
D_i=\{\delta_i\in\R^{n_i}\mid g_{i,k}(\delta_i)\le 0\fuer k=1,\dots,m_i\},
\end{align*}
where $g_{i,k}:\R^{n_i}\rightarrow\R$ are convex differentiable functions.
We will use the tuple $(g_i,v_i,f_i)$ to denote convex bid parameters.
A bid is \emph{convex} if its parameters are convex.
\end{defn}

\newcommand\welfarethm{fundamental welfare theorem}
\begin{thm}[First Fundamental Welfare Theorem]\label{thm fundamental welfare}
Let $T$ be the set of commodities without the\linebreak[4] numéraire.
Let the bids of all participants $i\in I$ be expressed by convex bid parameters
$(g_i,v_i,f_i)$ and let the weak Slater assumption hold for the constraints
\eqref{eq:fwt:2}-\eqref{eq:fwt:3}.

Then the tuple $(\valdelta_i)_{i\in I}$ and
the price vector $\p^*\in\R^T$ constitute a \emph{competitive equilibrium}
with respect to a \emph{strict linear pricing schedule}
if and only if $\valdelta$ is an optimal solution to
\begin{align}
\max\quad&\sum_{i\in I}v_i(\delta_i)\label{eq:fwt:1},\\
\st\quad &\sum_{i\in I}f_i(\delta_i)=0&&[\pi],\label{eq:fwt:2}\\
         &\delta_i\in D_i&&[\lambda_i]&&\forall i\in I\label{eq:fwt:3},
\end{align}
\newcommand{\welfareOP}{\eqref{eq:fwt:1}-\eqref{eq:fwt:3}}%
and $(\valp,\vallambda)$ is an optimal dual solution. \p\ is the dual variable
to the clearing condition and $\lambda_{i,k}$ is the dual variable to the decision
set constraints $g_{i,k}(\delta_i)\le 0$. An optimal solution to this problem
is called a \emph{welfare maximizing solution}.
\end{thm}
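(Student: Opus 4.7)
The plan is to prove the equivalence through the KKT conditions. Because each $v_i$ is concave, each $f_i$ is affine, and each $g_{i,k}$ is convex and differentiable, the welfare problem \eqref{eq:fwt:1}-\eqref{eq:fwt:3} is a convex program (concave objective, affine equality constraint, convex inequality constraints). Together with the weak Slater assumption, this makes the KKT conditions both necessary and sufficient for (primal-dual) optimality. The same convexity and Slater regularity carry over, for each fixed $\pi^*$, to the single-agent problem $\max\{v_i(\delta_i)-(\pi^*)^\top f_i(\delta_i)\mid \delta_i\in D_i\}$, so KKT is necessary and sufficient there as well.

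First I would write down the KKT system for the welfare problem. Letting $\pi\in\R^T$ denote the multiplier on the clearing condition \eqref{eq:fwt:2} and $\lambda_{i,k}\ge 0$ the multipliers on $g_{i,k}(\delta_i)\le 0$, the Lagrangian is $\sum_i v_i(\delta_i) - \pi^\top\sum_i f_i(\delta_i) - \sum_{i,k}\lambda_{i,k}g_{i,k}(\delta_i)$. The stationarity condition splits across participants: for each $i$,
\begin{equation*}
\nabla v_i(\delta_i) - J_{f_i}^{\top}\pi - \sum_{k=1}^{m_i}\lambda_{i,k}\nabla g_{i,k}(\delta_i)=0,
\end{equation*}
together with primal feasibility $\sum_i f_i(\delta_i)=0$, $g_{i,k}(\delta_i)\le 0$, dual feasibility $\lambda_{i,k}\ge 0$, and complementary slackness $\lambda_{i,k}g_{i,k}(\delta_i)=0$.

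Next I would write the KKT system for each individual problem at the price vector $\pi^*$. Since $f_i$ is affine, the objective $v_i(\delta_i)-(\pi^*)^\top f_i(\delta_i)$ is concave, and the gradient of the linear term is exactly $J_{f_i}^{\top}\pi^*$. The KKT conditions for participant $i$ are therefore the per-agent stationarity displayed above (with $\pi=\pi^*$), the feasibility $g_{i,k}(\delta_i)\le 0$, $\lambda_{i,k}\ge 0$, and $\lambda_{i,k}g_{i,k}(\delta_i)=0$. This is precisely the $i$-th block of the welfare KKT system.

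Having identified the two systems, the equivalence is immediate in both directions. If $(\valdelta,\valp,\vallambda)$ is primal-dual optimal for \eqref{eq:fwt:1}-\eqref{eq:fwt:3}, then the decomposition of its KKT conditions yields that each $\valdelta_i$ satisfies the KKT conditions of the individual problem at price $\valp$, hence by sufficiency $\valdelta_i$ solves \eqref{eq:walras:individual}; the clearing condition \eqref{eq:26} is the primal feasibility \eqref{eq:fwt:2}; and \eqref{eq:25} is simply the definition of $w_i^*$. Conversely, if each $\valdelta_i$ solves \eqref{eq:walras:individual} at some price $\valp$ and \eqref{eq:26} holds, then by necessity of KKT for each individual problem we obtain multipliers $\vallambda_i$ satisfying per-agent stationarity plus feasibility and complementary slackness; reassembling these with the clearing condition yields a full KKT tuple for \eqref{eq:fwt:1}-\eqref{eq:fwt:3}, proving optimality by sufficiency. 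The main obstacle is being careful that the weak Slater hypothesis on the joint constraints \eqref{eq:fwt:2}-\eqref{eq:fwt:3} really does guarantee a constraint qualification for both the welfare program and every single-agent subproblem; this is the step that justifies invoking KKT as an "iff" in both directions, and I would address it explicitly by noting that the $\delta_i$-components of any weak Slater point give a strictly feasible point for each $D_i$.
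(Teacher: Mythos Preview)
Your proof is correct and follows essentially the same route as the paper: write the KKT conditions for the welfare problem \eqref{eq:fwt:1}--\eqref{eq:fwt:3}, observe that they decompose participant-by-participant into exactly the KKT conditions of the individual surplus problems \eqref{eq:walras:individual} together with the clearing condition, and invoke the weak Slater assumption so that KKT is necessary and sufficient on both levels. If anything, you are slightly more explicit than the paper about why Slater for the joint constraints implies Slater for each $D_i$.
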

\begin{proof}
As the weak Slater assumption holds for \eqref{eq:fwt:2}-\eqref{eq:fwt:3} it also
holds for the individual optimization problems \eqref{eq:walras:individual}. This
yields that in both cases the KKT conditions are necessary and sufficient to describe
optimal solutions. The proof is basically a straightforward reformulation of the
KKT conditions.

At first we write out the optimization problem \eqref{eq:fwt:1}-\eqref{eq:fwt:3} in detail:
\begin{align*}
\max\quad&\sum_{i\in I}v_i(\delta_i),\\
\st\quad &\sum_{i\in I}f_{i,t}(\delta_i)=0&&[\pi_t]&&\forall \hours,\\
         &g_{i,k}(\delta_i)\le0&&[\lambda_{i,k}]&&\forall i\in I, k=1,\dots,m_i,\\
         &\delta_{i,j}\in\R&&&&\forall i\in I,j=1,\dots,n_i.
\end{align*}
A solution $\valdelta$ is optimal to this problem if and only if there exist
variables $\valp$ and $\vallambda$ such that
\begin{align}
\sum_{i\in I}f_{i,t}(\valdelta_i)&=0&&\forall\hours,\\
g_{i,k}(\valdelta_i)&\le0&&\forall i\in I, k= 1,\dots,m_i,
    \label{eq walras primal feas}\\
\sum_\hours\valp_t\frac{\partial f_{i,t}}{\partial\delta_{i,j}}(\valdelta_i)
+\sum_{k=1}^{m_i}\vallambda_{i,k}\frac{\partial g_{i,k}}{\partial\delta_{i,j}}(\valdelta_i)
&=\frac{\partial v_i}{\partial\delta_{i,j}}(\valdelta_i)&&\forall i\in I,j=1,\dots,n_i,
    \label{eq walras dual feas 1}\\
\vallambda_{i,k}&\ge0&&\forall i\in I, k= 1,\dots,m_i,
    \label{eq walras dual feas 2}\\
g_{i,k}(\valdelta_i)\vallambda_{i,k}&=0&&\forall i\in I, k=1,\dots,m_i
    \label{eq walras complementarity}.
\end{align}
Equations \eqref{eq walras primal feas}, \eqref{eq walras dual feas 1}-%
\eqref{eq walras dual feas 2}, and \eqref{eq walras complementarity} correspond
to the primal feasibility, the dual feasibility, and the complementarity condition
of the individual optimization problems \eqref{eq:walras:individual}.
This yields the following reformulation:
\begin{align*}
&\sum_{i\in I}f_{i,t}(\valdelta_i)=0&&\forall\hours,\\
&\left(\matr{
\valdelta_i\in\arg\max&\quad v_i(\delta_i) - \sum_\hours \valp_t f_{i,t}(\delta_i)\\
\hfill\st&\quad g_{i,k}(\delta_i)\le 0\qquad\hfill&\forall k=1,\dots,m_i
}\right)&&\forall i\in I.
\end{align*}
Note that the price $\valp$ is an exogenously given parameter in the individual 
optimization problem. In other words:
\begin{align*}
&\sum_{i\in I}f_{i,t}(\valdelta_i)=0&&\forall\hours,\\
&\valdelta_i\in\arg\max\{v_i(\delta_i) - {\valp}\tp f_i(\delta_i)
\mid \delta_i\in D_i\}&&\forall i\in I.
\end{align*}
These two equations in conjunction with the linear pricing schedule 
$w^*_i:=-{\valp}\tp f_i(\valdelta_i)$ for all participants $i\in I$ 
yield that $\valdelta$ and the price vector $\valp$ constitute a 
competitive equilibrium with respect to a strict linear pricing schedule.
\end{proof}

\section{Competitive Equilibrium Relaxations in Non-Convex Auctions}
\label{sec competitive relaxations}

The previous section introduced the \welfarethm\ 
that allows us to compute a competitive equilibrium in a convex auction 
by solving a convex optimization problem.
As soon as at least one participant submits non-convex bid
parameters, the auction becomes \emph{non-convex}. In such cases
a competitive equilibrium with respect to a strict linear pricing schedule
might not exist, as the following example shows.

\begin{example}\label{ex there is no walrasian equilibrium}
There is one buyer and one seller. The buyer wants to buy at most one 
quantity unit and his benefit per unit amounts to four currency units 
per quantity unit. He submits the convex bid parameters
$(D_0=[0,1],\ v_0(\delta_0)=4\delta_0,\ f_0(\delta_0)=\delta_0)$.
The seller wants to sell exactly two units or no unit at all and his
cost per quantity unit amounts to three currency units per quantity unit.
He submits the non-convex bid parameters
$(D_1=\{-2,0\},\ v_1(\delta_1)=3\delta_1,\ f_1(\delta_1)=\delta_1)$.
The only feasible solution is $\delta_0=\delta_1=0$. 
We will see that there is no price $\p_0$ such that the tuple of decision
vectors $(\delta_0,\delta_1)$ and the price $\p_0$ constitute a competitive
equilibrium: If the price is strictly lower than four currency units,
then the optimal strategy of the buyer is to buy exactly one unit
which is not possible. If the price is strictly greater than three currency
units, then the optimal strategy of the seller is to sell exactly two units
which is not possible. Even though the solution is not a competitive
equilibrium, we can observe that at least no participant incurs a loss.
\end{example}

Regardless of whether a competitive equilibrium exists or not,
we can determine a welfare maximizing solution by solving
the model in Theorem \ref{thm fundamental welfare}. However the next example
shows that a welfare maximizing solution does not necessarily admit a strict
linear pricing schedule where no participant incurs a loss.

\begin{example}\label{ex surplus maximizing solution}
There are two buyers and one seller.
Buyer $1$ wants to buy at most one unit if the price is lower or equal to four.
Buyer $2$ wants to buy at most two units if the price is lower or equal to six.
Seller $1$ wants to sell exactly three or no units if the price is 
greater or equal to five.
\begin{align*}
\begin{aligned}
\text{buyer }1:\quad&(D_1=[0,1],v_1(\delta_1)=4\delta_1,f_1(\delta_1)=\delta_1),\\
\text{buyer }2:\quad&(D_2=[0,2],v_2(\delta_2)=6\delta_2,f_2(\delta_2)=\delta_2),\\
\text{seller }1:\quad&(D_3=\{-3,0\},v_3(\delta_3)=5\delta_3,f_3(\delta_3)=\delta_3).\\
\end{aligned}
&&
\begin{aligned}
\max\quad&4\delta_1+6\delta_2+5\delta_3,\\
\st\quad&\delta_1+\delta_2+\delta_3=0,\\
&\delta_1\in[0,1],\\
&\delta_2\in[0,2],\\
&\delta_3\in\{-3,0\}.
\end{aligned}
\end{align*}
The welfare maximizing solution $(\delta_1^*,\delta_2^*,\delta_3^*)$ is $(1,2,-3)$.
If the price $\p_0$ is strictly greater than four, then buyer 1 incurs a loss, as
he is only willing to pay at most four CU/QU (currency units per quantity unit).
If the price is strictly less than five, then seller 1 incurs a loss, as he
wants to receive at least five CU/QU.
\end{example}

As shown by the previous examples, a competitive equilibrium might not exist
and a welfare maximizing solution does not necessarily admit a strict linear
pricing schedule, where no participant incurs a loss.
This implies that in general there is no $(\delta,\p)$ such that
\begin{align*}
&\sum_{i\in I}f_{i,t}(\delta_i)=0&&\forall\hours,\\
&\delta_i\in\arg\max\{v_i(\delta_i') - {\p}\tp f_i(\delta_i')
\mid \delta_i'\in D_i\}&&\forall i\in I.
\end{align*}
In practice a lot of exchanges facilitate the participants to submit non-convex bid parameters.
A popular non-convex bid in stock exchanges is the so called \emph{fill-or-kill limit order}.
The sell order in Example \ref{ex there is no walrasian equilibrium} is such a non-convex
fill-or-kill limit order, whereas the buy order is a convex \emph{limit order}. In other
words the example is a small auction at a stock exchange and it shows that
in general stock exchanges cannot publish execution schedules and strict linear prices 
that constitute a competitive equilibrium. In fact most of the exchanges are publishing
strict linear prices. This implies that they relax some of the optimality conditions of
the individual optimization problems to make the previous optimization problem feasible.
It is clear that there are various possibilities for relaxing optimality conditions and
for choosing a solution within these newly introduced degrees of freedom.

We will present two closely related approaches. The first one shows that it is sufficient
to relax some of the optimality conditions: the execution state of a non-convex 
bid either maximizes the individual surplus or the bid is rejected. The second approach 
is a pragmatic one: it ensures that the surplus of non-convex bids is non-negative.

\subsection{Model A: Individual Surplus Maximization or Rejection}
\label{sec maximization or rejection}

Again let $T$ be the set of commodities without the numéraire.
Let $I=I\convex\dot\cup I\nonconvex$ be the set of all bids, at which $I\convex$
is the set of convex bids and $I\nonconvex$ is the set of non-convex bids.
The bid parameters of bid $i\in I\convex\cup I\nonconvex$ are denoted by
$(D_i,v_i,f_i)$. Consider the model
\begin{align}
\max\quad
&\sum_{i\in I}v_i(\delta_i),
\label{eq nonconvex rejection relaxation 0}\tag{MainMPEC}\\
\st\quad
\label{eq convex competitive equilibrium 1}
&\sum_{i\in I}f_{i,t}(\delta_i)=0
    &&\forall\hours,\\
&\delta_i\in\arg\max\{v_i(\delta_i') - {\p}\tp f_i(\delta_i')
\label{eq convex competitive equilibrium 2}
\mid \delta_i'\in D_i\}
    &&\forall i\in I\convex,\\
\label{eq nonconvex rejection relaxation 3}
&\delta_i\in\arg\max\{v_i(\delta_i') - {\p}\tp f_i(\delta_i')
\mid \delta_i'\in D_i\}\cup\ker f_i
    &&\forall i\in I\nonconvex.
\end{align}
\newcommand\rejectionMPEC{\eqref{eq nonconvex rejection relaxation 0}}%
The Greek letters $\delta$ and $\p$ denote the variables of the model.
If there are only convex bids, then $I\nonconvex=\emptyset$ and equations
\eqref{eq convex competitive equilibrium 1}-\eqref{eq convex competitive equilibrium 2}
sufficiently describe a welfare maximizing solution.
In equation \eqref{eq nonconvex rejection relaxation 3} we extended the feasible
region by adding the kernel of the quantity functions.
This relaxation enables us to reject non-convex bids independent of the prices $\p$:
If the decision variables of the non-convex bids are
in the kernel of the respective quantity function (i.e., $\delta_i\in\ker f_i$
for $i\in I\nonconvex$), then in the clearing condition 
\eqref{eq convex competitive equilibrium 1} the term $f_{i,t}(\delta_i)$ vanishes for all 
non-convex bids $i\in I\nonconvex$. This yields that the model is feasible if there is a
solution that only involves the convex bids. If for example for each convex bid there
is a feasible decision vector such that the quantity function maps to zero
(i.e., $\ker f_i\cap D_i\neq\emptyset$ for $i\in I\convex$),
then the model is feasible.

In example \ref{ex there is no walrasian equilibrium} the model would select
a solution that satisfies $\delta_0=\delta_1=0$ and $\p_0\ge4$: Let for example
$\p_0=4$ then $\arg\max\{v_0(\delta_0') - \p\tp f_0(\delta_0')\mid \delta_0'\in D_0\}
=\arg\max\{4\delta_0' - 4 \delta_0'\mid \delta_0'\in [0,1]\}=[0,1]\ni\delta_0$
and $\ker f_1=\{0\}\ni\delta_1$. Note that this solution is not a competitive 
equilibrium because at price four the optimal strategy of the seller is
to sell exactly two units instead of selling nothing.

In example \ref{ex surplus maximizing solution} the model cut off the
welfare maximizing solution and return a solution that satisfies
$\delta_1=\delta_2=\delta_3=0$ and $\p_0\ge 6$.

In general we cannot find a competitive equilibrium, but we can find 
strict linear prices where the decision variables of all convex bids
are profit maximizing:
\begin{cornr}\label{cor prices for convex bids}
Let $T$ be the set of commodities without the numéraire and let $(g_i,v_i,f_i)$
be the bid parameters of convex bids $i\in I\convex$ and $(D_i,v_i,f_i)$
the bid parameters of non-convex bids $i\in I\nonconvex$.

Let $\valdelta$ be an optimal solution to
\begin{align}
\max\quad
&\sum_{i\in I}v_i(\delta_i),
    \label{cor prices for convex bids eq 0}
    \tag{MaxWelfare}\\
\st\quad
&\sum_{i\in I}f_{i,t}(\delta_i)=0
    &&\forall\hours,\\
&g_i(\delta_i)\le0
    &&\forall i\in I\convex,\\
&\delta_i \in D_i
    &&\forall i\in I\nonconvex.
\end{align}
Let the weak Slater assumption hold for 
$\sum_{i\in I\convex}f_{i}(\delta_i)=-\sum_{i\in I\nonconvex}f_{i}(\valdelta_i)$
and $\forall i\in I\convex:g_i(\delta_i)\le 0$.
The assumption holds if for example all convex decision sets are polyhedrons.
Then there exists $\p\in\R^T$ with
\begin{align*}
\valdelta_i\in\arg\max\{v_i(\delta_i) - {\p}\tp f_i(\delta_i)
\mid g_i(\delta_i)\le 0\}
    &&\forall i\in I\convex. 
\end{align*}
In other words: for all $i\in I\convex$ there exists $\p$ and $\mu_i$ with
\begin{align*}
\mu_i\tp g_i(\valdelta_i) = 0
&&\wedge&&
\mu_i\tp \DD g_i(\valdelta_i)=\DD v_i(\valdelta_i)-\p\tp \DD f_i(\valdelta_i)
&&\wedge&&
\mu_i\ge 0.
\end{align*}
\end{cornr}
\newcommand{\maxWelfare}{\eqref{cor prices for convex bids eq 0}}%
\begin{proof}
Let $\valdelta$ be an optimal solution. We will replace all non-convex
bids by ``constant'' convex bids, such that the fundamental welfare theorem 
yields the desired result.
For all $i\in I\nonconvex$ let $D_i^*=\{\valdelta_i\}$, 
$f_i^*(\delta)=f_i(\valdelta)$, and $v_i^*(\delta)=v_i(\valdelta)$.
Each bid $(D_i^*,v_i^*,f_i^*)$ is convex, as the decision set is a singleton
and the decision valuation and the quantity function are constant functions.
If we replace all non-convex bids by these constant bids, then $\valdelta$
is still an optimal solution to the modified problem. As the modified
problem is a convex auction, we can apply the fundamental welfare theorem.
\end{proof}

In the following we present an algorithm that exploits this property 
and ensures the optimality conditions without modeling them explicitly.
Similar to the generalized Benders decomposition (\cite{geoffrion1972}),
the algorithm decomposes the problem into a master problem and a subproblem.
Readers who are not interested in algorithmic details can safely skip
the rest of this subsection.

In our applications the individual optimization problems of non-convex 
bids are bounded mixed integer programs, that is, the decision valuations
and the quantity functions are affine linear functions and the decision sets
are feasible regions of bounded mixed integer programs. In our case
these mixed integer programs are very small such that we can specify
the polyhedral convex hull of each non-convex decision set.
This property is crucial for the algorithm and points out an important
computational limitation.

For the sake of exposition we will focus only on mixed integer auctions.
In a \emph{mixed integer auction} all bids are either 
convex or mixed integer bids. Mixed integer bids are introduced in
\newcommand\MIBparams{$((A_i,a_i,z_i),c_i,Q_i)$}
\begin{defn}
Let $T$ be the set of commodities without the numéraire.
A bid $i$ with parameters $(D_i,v_i,f_i)$ is a \emph{mixed integer bid}
if $D_i$ is bounded and there exist parameters \MIBparams\ with
\begin{align*}
   v_i(\delta_i)=c_i\tp\delta_i,
 &&f_i(\delta_i)=Q_i\delta_i,
 &&D_i=\{\delta_i\in\R^{n_i-z_i}\times\Z^{z_i}\mid A_i\delta_i\le a_i\},
 &&\und z_i\ge1.
\end{align*}
The price parameterized individual optimization problem is given by
\begin{align}
  \max\quad& c_i\tp\delta_i-\pi\tp Q_i\delta_i,\tag{MIBid)(\p}\\
  \st\quad&A_i\delta_i\le a_i,\\
          &\delta_i\in \R^{n_i-z_i}\times\Z^{z_i}.
\end{align}
\end{defn}

\begin{thm}\label{thm max or reject}
Let $v_i(\delta_i)=c_i\tp\delta_i$, $f_i(\delta_i)=Q_i\delta_i$, $D_i\subseteq\Rn$ bounded,
and let $\conv(D_i)=\{\delta_i\in\Rn \mid A_i\delta_i\le a_i\}$.
Then
\[
 \valdelta_i\in
 \arg\max\{c_i\tp\delta_i-\pi\tp Q_i\delta_i \mid \delta_i\in D_i\}\cup \{0\}
\]
if and only if there exists $\vallambda_i$ such that
\begin{align*}
&\valdelta_i\in
 \arg\max\{c_i\tp\delta_i-\pi\tp Q_i\delta_i \mid A_i\delta_i\le \vallambda_i a_i\},\\
&\valdelta_i\in D_i\cup\{0\},\\
&\vallambda_i\in\bin.
\end{align*}
\end{thm}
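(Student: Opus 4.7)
The plan is to read $\vallambda_i \in \bin$ as a binary selector: $\vallambda_i = 1$ will correspond to the bid being executed at an individually optimal decision, while $\vallambda_i = 0$ will correspond to the rejection case $\valdelta_i = 0$. Two standard facts drive the whole argument: (i) since the objective $c_i\tp \delta_i - \p\tp Q_i \delta_i$ is linear and $\conv(D_i) = \{\delta_i \mid A_i \delta_i \le a_i\}$, the LP over the convex hull has the same optimal value as the mixed integer optimization over $D_i$, and its optimizers in $D_i$ coincide with the IP optimizers; and (ii) since $D_i$ is bounded, $\conv(D_i)$ is a bounded polyhedron whose recession cone $\{\delta_i \mid A_i \delta_i \le 0\}$ is exactly $\{0\}$.

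For the direction $(\Rightarrow)$, I would split on which side of the union $\valdelta_i$ belongs to. If $\valdelta_i$ lies in the mixed integer $\arg\max$ set, I set $\vallambda_i := 1$; then $\valdelta_i \in D_i \subseteq \conv(D_i)$ is LP-feasible, and by (i) it attains the LP optimum, yielding all three required properties. If instead $\valdelta_i = 0$, I set $\vallambda_i := 0$; by (ii) the feasible set with right-hand side $0\cdot a_i$ reduces to $\{0\}$, so $\valdelta_i = 0$ is trivially optimal for the LP, and $\valdelta_i \in D_i \cup \{0\}$ is immediate.

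For the direction $(\Leftarrow)$, I again split on $\vallambda_i$. If $\vallambda_i = 0$, fact (ii) forces $\valdelta_i = 0$, which lies in $\{0\}$ and hence in the desired union, regardless of whether $0 \in D_i$. If $\vallambda_i = 1$, then $\valdelta_i$ is an optimizer of the LP over $\conv(D_i)$, and its objective value equals the IP optimum by (i); since the hypothesis gives $\valdelta_i \in D_i \cup \{0\}$, either $\valdelta_i \in D_i$, in which case $\valdelta_i$ is both IP-feasible and attains the IP optimum (hence lies in $\arg\max\{c_i\tp \delta_i - \p\tp Q_i \delta_i \mid \delta_i \in D_i\}$), or $\valdelta_i = 0$, which again lies in the second summand of the union.

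The main delicate step is the recession cone identity in (ii): boundedness of $D_i$ transfers to $\conv(D_i)$, and the recession cone of a polyhedron $\{A_i \delta_i \le a_i\}$ is obtained by homogenizing the right-hand side, which here collapses the feasible set to the origin. I would invoke this as a standard result from polyhedral theory rather than derive it. Once that is settled, the proof reduces to the clean case split on $\vallambda_i \in \bin$ sketched above.
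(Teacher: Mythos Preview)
Your proof is correct. Both your argument and the paper's rely on the same two ingredients: the equivalence of linear optimization over $D_i$ and over $\conv(D_i)$ (your fact (i), the paper's Lemma~\ref{thm max or reject lem 1}), and the collapse of $\{\delta_i \mid A_i\delta_i \le 0\}$ to $\{0\}$ by boundedness (your fact (ii), essentially the paper's Lemma~\ref{thm max or reject lem 2}). The organization, however, differs. You give a clean four-way case split on $(\vallambda_i,\valdelta_i)$ and verify the biconditional directly. The paper instead builds a chain of set equalities: it rewrites $\arg\max\{\cdot \mid \delta\in D\}\cup\{0\}$ via Lemma~\ref{thm max or reject lem 1}, distributes the union, replaces the LP $\arg\max$ by its KKT description (introducing dual multipliers $\mu$ explicitly), and then applies Lemma~\ref{thm max or reject lem 2} to the parameterized polytope $X(\mu)$ to insert the binary $\lambda$. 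Your route is shorter and more transparent for the theorem as stated; the paper's route has the side benefit of exposing the KKT multipliers $\mu$ and the complementarity structure explicitly, which is exactly the form reused in the subsequent \hyperref[price lp eq 0]{\textnormal{(PriceLP)}} subproblem. One minor point: the paper separately disposes of the degenerate case $D_i=\emptyset$, which you do not mention; you may want to add a sentence covering it.
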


\begin{lem}\label{thm max or reject lem 1}
Let $D\subseteq\Rn$ and $c\in\Rn$. Then
\[
\arg\max\{c\tp \delta \mid \delta\in D\}
=\arg\max\{c\tp \delta \mid \delta\in\conv D\} \cap D.
\]
\end{lem}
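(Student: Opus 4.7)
The plan is to prove equality by two set inclusions, each of which follows directly from the linearity of $c\tp(\cdot)$ together with the fact that points of $\conv D$ are finite convex combinations of points of $D$.

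For the inclusion $\subseteq$, suppose $\valdelta\in\arg\max\{c\tp\delta\mid\delta\in D\}$, so $c\tp\valdelta\ge c\tp\delta$ for every $\delta\in D$. Any $\delta'\in\conv D$ admits a representation $\delta'=\sum_k\lambda_k\delta_k$ with $\delta_k\in D$, $\lambda_k\ge0$, and $\sum_k\lambda_k=1$. Linearity then yields
\[
c\tp\delta'=\sum_k\lambda_k(c\tp\delta_k)\le\sum_k\lambda_k(c\tp\valdelta)=c\tp\valdelta,
\]
so $\valdelta$ maximizes $c\tp(\cdot)$ over $\conv D$ as well. Since $\valdelta\in D\subseteq\conv D$, this places $\valdelta$ in the right-hand side.

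For the inclusion $\supseteq$, suppose $\valdelta\in\arg\max\{c\tp\delta\mid\delta\in\conv D\}$ and $\valdelta\in D$. Because $D\subseteq\conv D$, the inequality $c\tp\valdelta\ge c\tp\delta$ for all $\delta\in\conv D$ restricts to $c\tp\valdelta\ge c\tp\delta$ for all $\delta\in D$, and $\valdelta\in D$ then certifies membership in the left-hand side.

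I do not anticipate any real obstacle; both directions are two-line linearity arguments and no topological hypothesis on $D$ is needed. The only mild subtlety concerns attainment: if neither maximum is attained both sides are empty and equality is vacuous, while if one side is nonempty the computation above forces the other to contain exactly the same maximizers.
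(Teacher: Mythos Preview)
Your proof is correct and follows essentially the same approach as the paper's own (commented-out) proof: both establish the two inclusions directly, using the convex-combination representation of points in $\conv D$ together with linearity of $c\tp(\cdot)$ for $\subseteq$, and the trivial restriction $D\subseteq\conv D$ for $\supseteq$.
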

\comment{
\begin{proof}
($\subseteq$) Let $\valdelta$ maximize $c\tp\delta$ subject to $\delta\in D$.
Then $\valdelta\in D\subseteq\conv D$ holds and for all $\delta\in D$ we have
$c\tp\delta\le c\tp\valdelta$. Recall that $\conv D=\left\{
  \sum_{i=1}^n\lambda_i\delta_i\mid
  \delta_i\in D,\ n\in\N,\ \sum_{i=1}^n\lambda_i=1,\ \lambda_i\ge 0
\right\}$. Let $\delta\in\conv D$ then there is $n\in\N$ 
and $\lambda_i\ge0,\delta_i\in D$ with
$\sum_{i=1}^n\lambda_i\delta_i$ and $\sum_{i=1}^n\lambda_i=1$. This yields
$c\tp\delta=\sum_{i=1}^n\lambda_i c\tp\delta_i
         \le\sum_{i=1}^n\lambda_i c\tp\valdelta=c\tp\valdelta$.
$\valdelta$ maximizes $c\tp\delta$ subject to $\delta\in\conv D$.\\
($\supseteq$) Let $\valdelta$ maximize $c\tp\delta$ subject to $\delta\in\conv D$
and let $\valdelta\in D$. Then for all $\delta\in\conv D$ we have
$c\tp\delta\le c\tp\valdelta$. Let $\delta\in D$, then $\delta\in\conv D$ and
$c\tp\delta\le c\tp\valdelta$. $\valdelta$ maximizes $c\tp\delta$ subject to
$\delta\in D$.
\end{proof}
}%
\begin{lem}\label{thm max or reject lem 2}
Let $X=\{x\in\Rn \mid A x\le a\}$ be a bounded polyhedron (i.e., a polytope). Then
\[
 X\cup\{0\} = \{x\in\Rn \mid Ax\le\lambda a, \lambda\in\bin\}.
\]
This is also true if we parameterize $X$: Let $\mu\in\R^m$ and 
$X(\mu)=\{x\in\Rn \mid A(\mu) x\le a(\mu)\}$ be a bounded parameterized polytope, then
\[
 X(\mu)\cup\{0\} = \{x\in\Rn \mid A(\mu)x\le\lambda a(\mu), \lambda\in\bin\}.
\]
\end{lem}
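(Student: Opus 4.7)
The plan is to prove set equality by mutual inclusion, then observe that the parameterized case follows by holding $\mu$ fixed. The forward inclusion is routine; the reverse direction hinges on exploiting boundedness of $X$ through its recession cone, and this is where the single interesting step lies.

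For the forward inclusion, take $x\in X\cup\{0\}$ and exhibit an appropriate $\lambda\in\bin$. If $x\in X$, then $Ax\le a=1\cdot a$, so $\lambda=1$ works. If $x=0$, then $Ax=0\le 0=0\cdot a$ componentwise regardless of the sign of the entries of $a$, so $\lambda=0$ works. For the reverse inclusion, let $x\in\Rn$ satisfy $Ax\le\lambda a$ with $\lambda\in\bin$. The case $\lambda=1$ gives $Ax\le a$ directly, hence $x\in X\subseteq X\cup\{0\}$. The case $\lambda=0$ reduces to showing that $Ax\le 0$ forces $x=0$, which is the heart of the argument.

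For that step, the plan is to use the standard recession-cone argument. Since $X$ is a polytope it is non-empty, so fix any $x_0\in X$. For every $t\ge 0$ we then have $A(x_0+tx)=Ax_0+tAx\le a+0=a$, so the entire ray $\{x_0+tx\mid t\ge 0\}$ is contained in $X$. If $x\neq 0$, this ray is unbounded, contradicting the boundedness of $X$; hence $x=0$ and therefore $x\in X\cup\{0\}$. The parameterized statement requires no new idea: for each fixed $\mu\in\R^m$ the polytope $X(\mu)=\{x\in\Rn\mid A(\mu)x\le a(\mu)\}$ is by hypothesis a bounded polyhedron, so applying the non-parameterized version to $A(\mu)$ and $a(\mu)$ immediately yields the claim.

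The main (and really the only) obstacle is the $\lambda=0$ case; everything else is bookkeeping. I would be careful to flag the implicit assumption that $X$ is non-empty, which is needed to produce the base point $x_0$ in the recession argument and which is consistent with the usual convention that a polytope is the non-empty convex hull of finitely many points.
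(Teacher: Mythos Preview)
Your argument is correct. The paper does not actually prove this lemma; it simply cites \cite{balas1979disjunctive} and moves on. Your approach is therefore different in spirit: rather than appealing to the general machinery of disjunctive programming, you give a short self-contained proof via the recession cone of $X$. The Balas reference situates the statement as a trivial instance of disjunctive reformulations (the union of a polytope with the origin being modeled by a single binary scaling variable), which is useful context if one later wants to generalize to unions of several polytopes; your direct argument, on the other hand, makes the role of boundedness transparent and avoids any external dependency. Your remark about non-emptiness is well placed: if $X=\emptyset$ the equality can fail (take $A=0$, $a=-1$), so the convention that a polytope is non-empty is indeed being used, and this is not made explicit in the paper.
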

\begin{proof}
Confer \cite{balas1979disjunctive}.
\end{proof}

\begin{proof}[Proof of Theorem \ref{thm max or reject}.]
For overview purposes we will omit the indices $i$. Recall that $\p$ is
exogenously given. If $D$ is empty, then the theorem is trivial. Let
$D$ be non-empty.
\begin{align*}
&\arg\max\{c\tp\delta-\p\tp Q\delta\mid\delta\in D\} \cup \{0\}
&&\mid \text{Lemma \ref{thm max or reject lem 1}}\\
&=\left(\arg\max\{c\tp\delta-\p\tp Q\delta\mid\delta\in\conv(D)\}
    \cap D\right)\cup\{0\}
&&\mid (A\cap D)\cup N=(A\cup N)\cap(D\cup N)\\
&=\left(\arg\max\{c\tp\delta-\p\tp Q\delta\mid A\delta\le a\}\cup\{0\}\right)
    \cap (D\cup\{0\})
&&\mid \text{KKT conditions}\\
&=\left(
   \left(
      \delta \mid \matr{
                    \exists\mu\ge 0\mit\hfill\\
                    A\tp\mu=c-Q\tp\p\hfill\\
                    (A\delta-a)\tp\mu=0\\
                    A\delta\le a\hfill
                  }
  \right) \cup\{\delta\mid\delta=0\}
  \right) \cap (D\cup\{0\}) = \dots
\end{align*}
The set $D$ is bounded, thus $\{\delta\mid A\delta\le a\}$ is bounded and therefore
$X(\mu)=\{\delta\mid A\delta\le a,(\mu\tp A)\delta=(\mu\tp a)\}$ is a bounded
polyhedron for all $\mu\ge0$. Let $P(\p)=\max\{c\tp\delta-\p\tp Q\delta\mid A\delta\le a\}$
and $P'(\p)=\{\mu\ge 0\mid A\tp\mu=c-Q\tp\p\}$. We know that $P(\p)$ has a finite
optimal solution for all $\p\in\R^T$ and the strong duality yields that $P'(\p)$
has a solution for all $\p\in\R^T$.
\begin{align*}
\dots&=\left(
         \{\delta\mid\exists\mu:\mu\in P'(\p),\delta\in X(\mu)\}
         \cup\{\delta\mid\exists\mu:\mu\in P'(\p),\delta\in\{0\}\}
       \right)
         \cap (D\cup\{0\})\\
&=\left\{\delta\mid\exists\mu:\mu\in P'(\p),\delta\in(X(\mu)\cup\{0\})\right\}
         \cap (D\cup\{0\})\\
&=\left\{\delta\mid\exists\mu:\mu\in P'(\p),\delta\in(X(\mu)\cup\{0\}),\delta\in(D\cup\{0\})\right\}
=\dots
\end{align*}
Lemma \ref{thm max or reject lem 2} yields that
$X(\mu)\cup\{0\}=\{\delta\mid\exists\lambda\in\bin:
A\delta\le\lambda a,(\mu\tp A)\delta=\lambda(\mu\tp a)\}$ for all $\mu$.
\[
\dots=\left\{
  \delta\mid\matr{
      \exists\lambda,\mu\mit\hfill\\
                    \mu\ge 0\hfill\\
                    A\tp\mu=c-Q\tp\p\hfill\\
                    \lambda\in\bin\hfill\\
                    (A\delta-\lambda a)\tp\mu=0\\
                    A\delta\le\lambda a\hfill\\
                    \delta\in D\cup\{0\}\hfill
    }
\right\}.
\]
\end{proof}

We will now describe the algorithm step by step. Given a mixed integer auction,
the algorithm finds an optimal solution to \rejectionMPEC. It computes the
optimal solution by solving a sequence of relaxations.
These relaxations omit the optimality conditions, such that each relaxation
is a mixed integer convex program, a MICP. The algorithm requires that
for each mixed integer bid the convex hull of the decision set is given.
For $\val\Lambda\subseteq\bin^{I\nonconvex}$ consider the 
$\val\Lambda$-parameterized optimization problem
\begin{align}
\max\quad&\sum_{i\in I} v_i(\delta_i),
  \label{eq masterMICP 0}\tag{MasterMICP)($\val\Lambda$}\\
\st\quad
&\sum_{i\in I} f_{i,t}(\delta_i)=0
  &&\forall \hours,
  \label{eq masterMICP 1}\\
&g_{i}(\delta_i)\le 0
  &&\forall i\in I\convex,
  \label{eq masterMICP 2}\\
&A_i\delta_i\le\lambda_i a_i
  &&\forall i\in I\nonconvex,
  \label{eq masterMICP 4}\\
&\delta_i\in\R^{n_i-z_i}\times\Z^{z_i}
  &&\forall i\in I\nonconvex,
  \label{eq masterMICP 5}\\
&\lambda\in\val\Lambda,
  &&\label{eq masterMICP 6}
\end{align}
\newcommand\masterMICP{\eqref{eq masterMICP 0}}%
\newcommand\masterMICPof[1]{\hyperref[eq masterMICP 0]{\textnormal{(MasterMICP)(#1)}}}%
where $T$ is the set of commodities without the numéraire, all $i\in I\convex$ are
convex bids with parameters $(g_i,v_i,f_i)$, and all $i\in I\nonconvex$ are
mixed integer bids with parameters \MIBparams. The binary variable $\lambda_i$ models
whether a non-convex bid $i$ is rejected or not (cf. Theorem \ref{thm max or reject}).

\begin{assumption}
In the rest of this chapter we always assume 
the following:

For all mixed integer bids $i\in I\nonconvex$ we have
$\conv(D_i)=\{\delta_i\mid A_i\delta_i\le a_i\}$
and for all convex bids $i\in I$ the decision set $D_i$ is bounded.
For all $\lambda\in\bin^{I\nonconvex}$ the weak slater assumption
holds for \eqref{eq masterMICP 1}-\eqref{eq masterMICP 4}.

(For example, the last assumption holds if there is 
a feasible solution $\valdelta$ to \eqref{eq masterMICP 1}-\eqref{eq masterMICP 2}
with $g_i(\valdelta_i)<0$ for all $i\in I\convex$
and $0=\delta_i\in D_i$ for all $i\in I\nonconvex$.)
\end{assumption}

In the first step, the algorithm computes an optimal solution
$(\valdelta,\vallambda)$ to \masterMICPof{$\bin^{I\nonconvex}$}.
Without loss of generality, we may assume that $\vallambda_i=1$ for all 
$i\in I\nonconvex$. Note that if we fix all $\lambda_i$ to $1$, then the
previous model is equivalent to the \maxWelfare\ model in Corollary 
\ref{cor prices for convex bids}. In other words $(\valdelta,\vallambda)$
is a welfare maximizing solution.
Observe that whenever $(\valdelta,\vallambda)$ is an optimal solution to 
\masterMICP, we can apply Corollary \ref{cor prices for convex bids}.
This becomes clear if we use parameterized decision sets $D_i'(\vallambda)$
in the \maxWelfare\ model: for all non-convex bids $i\in I\nonconvex$ use
$D_i'(\vallambda):=\{\delta_i\mid \delta_i\text{ satisfies \eqref{eq masterMICP 4}
and \eqref{eq masterMICP 5}}\}$ instead of $D_i$.

In the next step, a linear program checks whether there exists a
strict linear pricing schedule $\p$, such that $(\valdelta,\p)$
is feasible for \rejectionMPEC. In other words, the program checks
whether there exists a strict linear pricing 
schedule such that all convex bids are profit maximizing and all
non-convex bids are either profit maximizing or rejected.
Theorem \ref{thm max or reject} provides, that it is sufficient to
check whether there is a pricing schedule $\pi$ such that
\begin{align}
\valdelta_i&\in\arg\max\{v_i(\delta_i)-\p\tp f_i(\delta_i)\mid g_i(\delta_i)\le0\}
&&\forall i\in I\convex\und
\label{eq convex max}\\
\valdelta_i&\in\arg\max\{c_i\tp\delta_i-\p\tp Q_i\delta_i\mid A_i\delta_i\le \vallambda_i a_i\}
&&\forall i\in I\nonconvex.
\label{eq nonconvex max}
\end{align}
According to Corollary \ref{cor prices for convex bids} there is
a strict linear pricing schedule such that all convex bids are 
profit maximizing, i.e., such that the first equation holds.
Recall that the welfare maximizing solution does not necessarily possess
a pricing schedule that satisfies both equations
(cf. Example \ref{ex surplus maximizing solution}).

We use the KKT conditions to reformulate equations \eqref{eq convex max}
and \eqref{eq nonconvex max}. The objective of the following program is 
zero if and only if there exists a $\p$ such that the two equations hold.
This will be discussed in the following two paragraphs.
\begin{align}
\min\quad&-\sum_{i\in I\nonconvex}\mu_i\tp(A_i\valdelta_i-\vallambda_i a_i),
\label{price lp eq 0}\tag{PriceLP)($\valdelta,\vallambda$}\\
\st\quad
&\mu_i\tp g_i(\valdelta_i) = 0
    &&\forall i\in I\convex,
    \label{price lp eq 1}\\
&\mu_i\tp \DD g_i(\valdelta_i)=\DD v_i(\valdelta_i)-\p\tp \DD f_i(\valdelta_i)
    &&\forall i\in I\convex,
    \label{price lp eq 2}\\
&\mu_i\ge 0
    &&\forall i\in I\convex,
    \label{price lp eq 3}\\
&\mu_i\tp A_i=c_i\tp-\p\tp Q_i
    &&\forall i\in I\nonconvex,
    \label{price lp eq 4}\\
&\mu_i\ge 0
    &&\forall i\in I\nonconvex.
    \label{price lp eq 5}
\end{align}
\newcommand{\priceLP}{\eqref{price lp eq 0}}%
\newcommand{\priceLPof}[1]{\hyperref[price lp eq 0]{\textnormal{(PriceLP)(#1)}}}%
In this model the terms $\valdelta$ and $\vallambda$ are exogenously given 
parameters, whereas the terms $\p$ and $\mu$ are the variables. 
Observe that the model is a linear program. The term $\p$ represents 
a strict linear price vector and $\mu_i$ corresponds to the dual variables
of the price parameterized convex optimization problems
\eqref{eq convex max} and \eqref{eq nonconvex max}.

The following paragraph shows that \priceLP\ is feasible and the
objective equals zero if and only if there is a $\p$ with 
\eqref{eq convex max} and \eqref{eq nonconvex max}. Recall that
$(\valdelta,\vallambda)$ is an optimal solution to \masterMICP.
This allows us to apply Corollary \ref{cor prices for convex bids}, which yields
that there exists $\mu$ and $\p$ such that the constraints 
\eqref{price lp eq 1}-\eqref{price lp eq 3} are satisfied. Note that these three
constraints are the complementarity condition and the dual feasibility of
\eqref{eq convex max}. It remains to be checked that for a given $\p$ the constraints
\eqref{price lp eq 4}-\eqref{price lp eq 5} are feasible. Note that these two 
constraints correspond to the dual feasibility of the price
parameterized linear programs in \eqref{eq nonconvex max}. The definition of
mixed integer bids ensures that the feasible regions of these linear programs
are bounded, thus they have a finite optimal solution for all $\pi$.
Therefore, the dual problems of \eqref{eq nonconvex max} are feasible for all $\p$.
Since the last two constraints correspond to the dual feasibility of 
\eqref{eq nonconvex max}, for all $\p$ there exists $\mu$ such that they are satisfied.
Note that the objective of \priceLP\ corresponds to the complementarity conditions
of \eqref{eq nonconvex max}. Therefore, the objective is zero if and only if there
exists a $\p$ with \eqref{eq convex max} and \eqref{eq nonconvex max}.

Even though the model \priceLP\ depends on the parameters 
$\valdelta$ and $\vallambda$, under certain conditions it is 
actually independent of the particular choice of $\valdelta$:

\begin{prop}
Let $\val\Lambda\subseteq\bin^{I\nonconvex}$ and let $(\valdelta,\vallambda)$
and $(\delta',\vallambda)$ be optimal solutions to \masterMICP. 
If the optimal objective value of \priceLP\ is zero,
then the optimal objective of \priceLPof{$\delta',\vallambda$}
is zero and the sets of optimal solutions of both problems coincide.
\end{prop}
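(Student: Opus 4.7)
The plan is to exploit that $(\valdelta,\vallambda)$ and $(\delta',\vallambda)$ are both welfare-maximizers for \masterMICP\ with the same rejection pattern, which forces them to share every supporting price; I then argue that the constraints and objective of \priceLP\ depend on the primal optimum only through conditions that characterize dual-optimal multipliers, and dual-optimal sets are primal-invariant.

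First step: take an optimal solution $(\p^*, \mu^*)$ of \priceLP\ with objective value zero. By the KKT/LP-duality reformulation of \eqref{eq convex max}-\eqref{eq nonconvex max} summarized just before the proposition (together with Theorem \ref{thm max or reject}), objective zero is equivalent to $\valdelta_i$ maximizing the individual price-parameterized problem at price $\p^*$ for every bid $i$. Since $\delta'_i$ is feasible for bid $i$'s individual problem (whose feasibility region depends only on $g_i$ for convex bids and on $\vallambda_i$ for mixed-integer bids), I obtain
\[
v_i(\delta'_i) - (\p^*)\tp f_i(\delta'_i) \le v_i(\valdelta_i) - (\p^*)\tp f_i(\valdelta_i),\qquad \forall i\in I.
\]
Summing over $i$ and invoking the clearing conditions $\sum_{i\in I} f_i(\delta'_i) = \sum_{i\in I} f_i(\valdelta_i) = 0$ gives $\sum_i v_i(\delta'_i) \le \sum_i v_i(\valdelta_i)$; equality must hold since both primal solutions are optimal for \masterMICP, so every individual inequality is tight, and $\delta'_i$ is a maximizer of the individual problem at $\p^*$ too.

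Second step: I claim the same $(\p^*, \mu^*)$ is feasible for \priceLPof{$\delta',\vallambda$} with objective zero. For each mixed-integer bid $i$, constraints \eqref{price lp eq 4}-\eqref{price lp eq 5} do not involve the primal variable at all, and the objective term $-(\mu^*_i)\tp(A_i \delta'_i - \vallambda_i a_i)$ vanishes because LP strong duality forces complementarity between any primal optimum (here $\delta'_i$, by Step~1) and any dual optimum (here $\mu^*_i$, which is dual-feasible and complementary to $\valdelta_i$, hence dual-optimal) of the individual LP at price $\p^*$. For each convex bid $i$, constraints \eqref{price lp eq 1}-\eqref{price lp eq 3} evaluated at a primal optimum are, under the weak Slater assumption, equivalent to $\mu^*_i$ being a dual-optimal multiplier for the convex individual problem at price $\p^*$; this dual-optimal set depends only on the problem data and the price, so the same conditions hold when $\valdelta_i$ is replaced by $\delta'_i$.

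Third step: the objective of \priceLP\ is non-negative on its feasible region ($\mu \ge 0$ combined with $A_i \delta_i \le \vallambda_i a_i$), so value zero is optimal for \priceLPof{$\delta',\vallambda$} as well. Swapping the roles of $\valdelta$ and $\delta'$ gives the reverse inclusion, so the two problems share the same set of optimal $(\p, \mu)$. The main obstacle is the second step: one must recognize that the primal-dependent KKT/duality constraints of \priceLP\ impose on $\mu$ precisely the condition of being a dual-optimal multiplier for the corresponding convex or LP individual problem at $\p^*$, and that this condition is invariant under the choice of primal optimum.
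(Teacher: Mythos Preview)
Your proof is correct and rests on the same underlying fact as the paper---primal-invariance of dual-optimal multipliers---but it is organized differently. The paper lifts both primal optima to the aggregate convex relaxation \masterCPof{$\vallambda$}: it uses the \welfarethm\ to conclude that $\valdelta$ (and then $\delta'$, since both share the same objective value) maximizes \masterCP, and then invokes Proposition~\ref{prop optimal multipliers} once on that aggregate problem to identify the optimal set of \priceLP\ with the KKT multipliers of \masterCP, which are primal-invariant. You instead stay at the level of the individual price-parameterized problems: your summation argument in Step~1 re-derives the needed direction of the welfare theorem directly, and in Step~2 you apply the primal-invariance of dual optima separately for each bid (LP strong duality for the mixed-integer bids, convex KKT for the convex bids). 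Your route is more self-contained and avoids the detour through \masterCP; the paper's route is shorter because it reuses the machinery already in place and handles all bids uniformly with a single application of Proposition~\ref{prop optimal multipliers}.
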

\begin{proof}
Let $(\valdelta,\vallambda)$ and $(\delta',\vallambda)$
be optimal solutions to \masterMICP, then $\valdelta$ and $\delta'$
are optimal solutions to \masterMICPof{$\{\vallambda\}$}.
Let the optimal objective value of
\priceLP\ be zero. Then there is a $\p$ with \eqref{eq convex max} 
and \eqref{eq nonconvex max}. Furthermore, $\valdelta$ satisfies
the clearing condition $\sum_{i\in I} f_i(\valdelta_i)=0$, as it is
a feasible solution to \masterMICP. The \welfarethm\ yields that 
$\valdelta$ maximizes the convex program
\begin{align}
\max\quad&\sum_{i\in I} v_i(\delta_i),
  \label{masterCP}\tag{MasterCP)($\val\lambda$}\\
\st\quad
&\sum_{i\in I} f_{i,t}(\delta_i)=0
  &&\forall \hours,\\
&g_{i}(\delta_i)\le 0
  &&\forall i\in I\convex,\\
&A_i\delta_i\le\vallambda_i a_i
  &&\forall i\in I\nonconvex.
\end{align}
Note that \eqref{masterCP} is the convex relaxation of \masterMICPof{$\{\vallambda\}$}.
We know that $\sum_{i\in I}v_i(\valdelta_i)=\sum_{i\in I}v_i(\delta'_i)$,
as both solutions maximize \masterMICPof{$\{\vallambda\}$}. Therefore,
$\delta'$ maximizes \eqref{masterCP}. This time, the \welfarethm\ yields
that there is a $\p$ with \eqref{eq convex max} and \eqref{eq nonconvex max}.
In other words, the objective of \priceLPof{$\delta',\vallambda$} is zero.

Recall that $\valdelta$ and $\delta'$ maximize \eqref{masterCP}. Proposition
\ref{prop optimal multipliers} provides that the set of dual solutions
that satisfy the KKT conditions of \eqref{masterCP} is independent of the
particular primal optimal solution $\valdelta$ or $\delta'$. In other words,
the set of optimal solutions to \priceLP\ is equal to the set of optimal
solutions to \priceLPof{$\delta',\vallambda$}.
\end{proof}
\newcommand\masterCP{\eqref{masterCP}}%
\newcommand\masterCPof[1]{\hyperref[masterCP]{\textnormal{(MasterCP)(#1)}}}%

The next paragraph explains the meaning of the set $\Lambda$. Recall that
we want to solve \rejectionMPEC\ for a mixed integer auction. Theorem 
\ref{thm max or reject} provides the following reformulation:
\begin{align}
\max\quad
&\sum_{i\in I}v_i(\delta_i),
\label{mainMPEC1}\tag{MainMPEC$'$}\\
\st\quad
&\sum_{i\in I}f_{i,t}(\delta_i)=0
    &&\forall\hours,
    \label{mainMPEC1 eq 1}\\
&\delta_i\in\arg\max\{v_i(\delta_i') - {\p}\tp f_i(\delta_i')
\mid g_{i}(\delta'_i)\le 0\}
    &&\forall i\in I\convex,
    \label{mainMPEC1 eq 2}\\
&\delta_i\in\arg\max\{v_i(\delta_i') - {\p}\tp f_i(\delta_i')
\mid A_i\delta'_i\le\lambda_i a_i \}
    &&\forall i\in I\nonconvex,
    \label{mainMPEC1 eq 3}\\
&\delta_i\in\R^{n_i-z_i}\times\Z^{z_i}
  &&\forall i\in I\nonconvex,\\
&\lambda\in\bin^{I\nonconvex}.
\end{align}
Recall that the variables of this model are $\delta$, $\lambda$, and \p.
The fundamental welfare theorem provides that there exists a \p\ such that
$\delta$ satisfies \eqref{mainMPEC1 eq 1}-\eqref{mainMPEC1 eq 3} if
and only if $\delta$ maximizes \masterCPof{$\lambda$}. In this respect,
the previous model is equivalent to:
\begin{align}
\max\quad
&\sum_{i\in I}v_i(\delta_i),
\label{mainMPEC2}\tag{MainMPEC$''$}\\
\st\quad
&\delta\text{ maximizes }\masterCPof{$\lambda$},
    \label{mainMPEC2 eq 1}\\
&\delta_i\in\R^{n_i-z_i}\times\Z^{z_i}
  &&\forall i\in I\nonconvex,
    \label{mainMPEC2 eq 2}\\
&\lambda\in\bin^{I\nonconvex}.
\end{align}
Observe that the $\lambda$-part of a feasible solution to \eqref{mainMPEC2}
is in the set
\begin{align*}
 \Lambda^\circ
 :=&\ \{\lambda\in\bin^{I\nonconvex}
    \mid\text{\masterCPof{$\lambda$} has an optimal solution that is mixed integral}\}\\
 =&\ \{\lambda\in\bin^{I\nonconvex}
    \mid\text{there is a $\delta$ that maximizes
    \masterCPof{$\lambda$} and satisfies \eqref{mainMPEC2 eq 2}}\}.
\end{align*}
The feasible region of \eqref{mainMPEC2} remains unchanged if
we restrict the variable $\lambda$ to the set $\Lambda^\circ$, i.e., 
replace $\lambda\in\bin^{I\nonconvex}$ by $\lambda\in\Lambda^\circ$.
The fundamental welfare theorem yields that this also applies to
\eqref{mainMPEC1}. The following two lemmas will allow us to transform
the bilevel program into a program with just one level.
\begin{lem}
Let $f:\Rn\rightarrow\R$ and $X,Y\subseteq\Rn$.
If $\max\{f(x)\mid x\in X\}$ has an optimal solution $\val x$ with $\val x\in Y$, then
\begin{align*}
\left(\matr{
  \arg\max&f(x)\hfill\\
  \hfill\st&x\in X \cap Y
}\right)
=
\left(\matr{
  \arg\max&f(x)\hfill\\
  \hfill\st&x\in X
}\right) \cap Y.
\end{align*}
\end{lem}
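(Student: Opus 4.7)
The plan is to establish both inclusions by leveraging the hypothesis to identify the common optimal value across the two feasible regions. Let $M := \max\{f(x) \mid x \in X\}$ and let $\bar x \in X \cap Y$ be the optimum provided by the assumption, so $f(\bar x) = M$. The crucial observation is that because $\bar x$ lies in $X \cap Y$ and $X \cap Y \subseteq X$, we also have $\max\{f(x) \mid x \in X \cap Y\} = M$: the inequality $\le$ follows from $X \cap Y \subseteq X$, and $\ge$ follows because $\bar x$ is a feasible witness of value $M$.

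For the inclusion $\subseteq$, I would take $x^* \in \arg\max\{f(x) \mid x \in X \cap Y\}$. Then $x^* \in Y$ and $f(x^*) = M$ by the identification of optima above, while $x^* \in X$ shows $x^* \in \arg\max\{f(x) \mid x \in X\}$.

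For the reverse inclusion $\supseteq$, I would take $x^* \in \arg\max\{f(x) \mid x \in X\} \cap Y$, so that $x^* \in X \cap Y$ and $f(x^*) = M$. Since $M$ equals the optimal value over $X \cap Y$, this places $x^*$ in $\arg\max\{f(x) \mid x \in X \cap Y\}$.

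There is no real obstacle here; the statement is essentially a bookkeeping lemma. The only subtlety is that the hypothesis ``$\max\{f(x) \mid x \in X\}$ has an optimal solution $\bar x$ with $\bar x \in Y$'' is exactly what guarantees the two maxima coincide, without which the inclusion $\supseteq$ could fail (a maximizer of $f$ over $X$ might beat every point of $X \cap Y$, leaving $\arg\max_{X \cap Y} f$ disjoint from $\arg\max_X f \cap Y$). I would be careful in the write-up to invoke the hypothesis precisely at this point.
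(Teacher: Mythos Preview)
Your argument is correct and complete. The paper states this lemma without proof, treating it as elementary, so there is nothing to compare against; your two-inclusion argument via the common optimal value $M$ is exactly the natural way to fill the gap.

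One small slip in your closing remark: it is the inclusion $\subseteq$, not $\supseteq$, that can fail without the hypothesis. The direction $\supseteq$ holds unconditionally: if $x^{*}\in\arg\max_{X} f$ and $x^{*}\in Y$, then $x^{*}\in X\cap Y$ and $f(x^{*})\ge f(x)$ for every $x\in X\supseteq X\cap Y$, so $x^{*}\in\arg\max_{X\cap Y} f$. It is the forward inclusion that breaks when no maximizer of $f$ over $X$ lies in $Y$ (e.g.\ $X=\{0,1\}$, $Y=\{0\}$, $f(x)=x$ gives $\arg\max_{X\cap Y} f=\{0\}$ while $\arg\max_X f\cap Y=\emptyset$). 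This does not affect the validity of your proof, only the commentary.
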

\begin{lem}
Let $f:\Rn\rightarrow\R$, $Y\subseteq\Rm$, and for all $y\in Y$ let $X(y)\subseteq \Rn$.
If for all $y\in Y$ the program $\max\{f(x)\mid x\in X(y)\}$ has an optimal solution, then
\begin{align*}
\left(\matr{
   \arg\max&f(x)\hfill\\
   \hfill\st&\matr{
             x\in\arg\max&f(x)\hfill\\
                \hfill\st&x\in X(y)
            }\\
        &y\in Y\hfill
}\right)
=\left(\matr{
  \arg\max &f(x)\hfill\\
  \hfill\st&x\in X(y)\hfill\\
         &y\in Y\hfill
}\right).
\end{align*}
\end{lem}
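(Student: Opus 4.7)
The plan is to prove equality of the two $\arg\max$ sets by showing that both optimization problems share the same optimal value and then establishing set inclusion in both directions. The essential observation is that $f$ serves as the objective at both levels of the nested program, so the inner-level optimality constraint is automatically implied by outer-level optimality.

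First, I will define $V := \sup\{f(x) : x \in \arg\max\{f(x'):x' \in X(y)\},\ y \in Y\}$ and $W := \sup\{f(x) : x \in X(y),\ y \in Y\}$. The inequality $V \le W$ is immediate, since every feasible pair $(x,y)$ for the left-hand program satisfies $x \in X(y)$ (because the inner $\arg\max$ is a subset of $X(y)$) and $y \in Y$, making it feasible for the right-hand program. For the reverse, take any feasible $(x,y)$ for the right-hand program. By hypothesis there exists $x^*(y) \in \arg\max\{f(x'):x' \in X(y)\}$, and this $x^*(y)$ is feasible for the left-hand program with value $f(x^*(y)) \ge f(x)$. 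Taking the supremum over all right-hand feasible points yields $W \le V$, so $V = W$.

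Next I would handle the two set inclusions, treating the case where the supremum is not attained (both $\arg\max$ sets empty) as trivial. For the forward inclusion, suppose $x$ belongs to the left-hand $\arg\max$. Then there is some $y \in Y$ with $x \in \arg\max\{f(x'):x' \in X(y)\}$, which forces $x \in X(y)$, and $f(x) = V = W$. Hence $x$ is feasible for the right-hand problem and attains its optimum, so it lies in the right-hand $\arg\max$. For the reverse inclusion, suppose $x$ lies in the right-hand $\arg\max$, with witness $y \in Y$, $x \in X(y)$, and $f(x) = W$. For every $x' \in X(y)$, the pair $(x', y)$ is right-hand feasible, so $f(x') \le W = f(x)$, which shows $x \in \arg\max\{f(x'):x' \in X(y)\}$. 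Thus $x$ is feasible for the left-hand program with value $V = W$, placing it in the left-hand $\arg\max$.

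No step here is genuinely hard; the content of the lemma is really the bookkeeping observation that, because $f$ appears as objective at both levels, outer optimality automatically forces inner optimality. The only place requiring care is the use of the hypothesis that $\max\{f(x) \mid x \in X(y)\}$ is attained for every $y \in Y$: this is precisely what rules out pathological cases in which $W$ could exceed $V$ via supremum behaviour of the inner problem along some $y \in Y$ without the inner $\arg\max$ being nonempty, and it is used exactly once, in the step establishing $W \le V$.
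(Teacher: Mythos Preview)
Your proof is correct. The paper states this lemma without proof, treating it as an elementary observation, so there is no approach in the paper to compare against; your argument---showing the two problems share the same optimal value and then verifying both inclusions---is the natural way to fill in the details, and your identification of where the attainment hypothesis is actually used is accurate.
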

The program \eqref{mainMPEC2} is equivalent to \masterMICPof{$\Lambda^\circ$}
\begin{align}
\max\quad&\sum_{i\in I} v_i(\delta_i),
  \tag{MasterMICP)($\Lambda^\circ$}\\
\st\quad
&\sum_{i\in I} f_{i,t}(\delta_i)=0
  &&\forall \hours,\\
&g_{i}(\delta_i)\le 0
  &&\forall i\in I\convex,\\
&A_i\delta_i\le\lambda_i a_i
  &&\forall i\in I\nonconvex,\\
&\delta_i\in\R^{n_i-z_i}\times\Z^{z_i}
  &&\forall i\in I\nonconvex,\\
&\lambda\in\Lambda^\circ.
  &&
\end{align}
Putting all together we obtain that \rejectionMPEC\ is equivalent to
\masterMICPof{$\Lambda^\circ$}. In other words, there is a $\vallambda$
such that $(\valdelta,\vallambda)$ maximizes \masterMICPof{$\Lambda^\circ$}
if and only if there is a $\p$ such that $(\valdelta,\p)$ maximizes
\rejectionMPEC.

\begin{prop}
Let $\val\Lambda\subseteq\bin^{I\nonconvex}$. If $(\valdelta,\vallambda)$ maximizes
\masterMICP\ and the optimal objective value of \priceLP\ is non-zero
then \masterCP\ has no optimal solution that is mixed integral, i.e.,
$\vallambda\notin\Lambda^\circ$.
\end{prop}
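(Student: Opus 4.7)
The plan is a proof by contradiction: I would suppose that \masterCPof{$\vallambda$} does admit an optimal solution $\delta'$ that is mixed integral, and then derive that the optimum of \priceLP\ equals zero, contradicting the hypothesis. First, I would show that $(\delta',\vallambda)$ is itself an optimal solution of \masterMICP. Since $\delta'$ is mixed integral and satisfies $A_i\delta'_i\le\vallambda_i a_i$ along with the convex constraints, $(\delta',\vallambda)$ is feasible for \masterMICP, and its objective coincides with the optimum of the convex relaxation \masterCPof{$\vallambda$}, which upper bounds the value of \masterMICPof{$\{\vallambda\}$} and hence of \masterMICP\ over $\val\Lambda\ni\vallambda$. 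Since $(\valdelta,\vallambda)$ is $\val\Lambda$-optimal, the two objectives must coincide, so $(\delta',\vallambda)$ is also optimal for \masterMICP.

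Next I would invoke Theorem \ref{thm fundamental welfare} on \masterCPof{$\vallambda$}, which is a genuine convex auction: the non-convex decision sets have been replaced by the polyhedra $\{\delta_i:A_i\delta_i\le\vallambda_i a_i\}$, the valuations $v_i$ are concave, and the quantity functions $f_i$ are affine, with the weak Slater assumption inherited from the standing hypothesis. The fundamental welfare theorem then supplies a strict linear price vector $\p$ and multipliers $\mu$ certifying that $\delta'$ is individually profit-maximizing at every bid. Reading off the KKT conditions, the convex-bid system is exactly \eqref{price lp eq 1}--\eqref{price lp eq 3}, while the non-convex-bid system gives the dual feasibility \eqref{price lp eq 4}--\eqref{price lp eq 5} together with the complementarity $\mu_i^{\top}(A_i\delta'_i-\vallambda_i a_i)=0$. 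Hence $(\p,\mu)$ is feasible for \priceLPof{$\delta',\vallambda$} and achieves objective value zero.

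Finally I would apply the preceding proposition with the roles of $\valdelta$ and $\delta'$ interchanged: both pairs are optima of \masterMICP, the optimum of \priceLPof{$\delta',\vallambda$} is zero, and the symmetric conclusion of that proposition then forces the optimum of \priceLP\ to be zero as well, contradicting the hypothesis. The main obstacle is the first step: some care is required to use the $\val\Lambda$-optimality of $(\valdelta,\vallambda)$ to upgrade the merely feasible mixed-integer pair $(\delta',\vallambda)$ into a full \masterMICP\ optimum, so that the previous proposition is applicable to $(\valdelta,\vallambda)$ and $(\delta',\vallambda)$ in either order. The remainder is essentially bookkeeping, matching the KKT system of \masterCPof{$\vallambda$} at $\delta'$ to the constraints and objective of \priceLPof{$\delta',\vallambda$}.
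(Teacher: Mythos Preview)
Your argument is correct, but it takes a longer detour than the paper's proof. You first upgrade $(\delta',\vallambda)$ to a \masterMICP\ optimum, then use Theorem~\ref{thm fundamental welfare} to zero out \priceLPof{$\delta',\vallambda$}, and finally invoke the preceding proposition (with the roles swapped) to transfer this to \priceLPof{$\valdelta,\vallambda$}. The paper instead observes directly that a non-zero optimum in \priceLP\ means the KKT system of \masterCPof{$\vallambda$} is infeasible at $\valdelta$, so $\valdelta$ is \emph{not} optimal for \masterCPof{$\vallambda$}; hence any optimal $\delta'$ satisfies $\sum_i v_i(\delta')>\sum_i v_i(\valdelta)$. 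If such a $\delta'$ were mixed integral it would be feasible for \masterMICPof{$\{\vallambda\}$}, contradicting the optimality of $\valdelta$ there. This avoids both the welfare theorem and the previous proposition, reducing the argument to a two-line objective-value comparison.

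One expositional point: your ``hence'' in the chain ``\masterCPof{$\vallambda$} upper bounds \masterMICPof{$\{\vallambda\}$} and hence \masterMICP\ over $\val\Lambda$'' is not immediate, since enlarging $\{\vallambda\}$ to $\val\Lambda$ can only raise the optimum. The step is rescued by the fact that $(\valdelta,\vallambda)$ already attains the $\val\Lambda$-optimum with $\lambda$-part $\vallambda$, so the two MICP optima coincide; you do invoke this in the next sentence, but the ordering of the argument would be cleaner if you compared $\sum_i v_i(\delta')$ directly to $\sum_i v_i(\valdelta)$ via feasibility of $\valdelta$ in \masterCPof{$\vallambda$}.
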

\begin{proof}
The optimal objective of \priceLP\ is zero if and only if there exist dual 
variables that satisfy the KKT conditions. It follows that $\valdelta$ is not 
an optimal solution to \masterCP. Suppose that there is an optimal solution 
$\delta'$ to \masterCP\ that is mixed integral, then 
$\sum_{i\in I}v_i(\valdelta) < \sum_{i\in I}v_i(\delta')$.
The solution $\delta'$ is also feasible to \masterMICPof{$\{\vallambda\}$}
and $\valdelta$ maximizes \masterMICPof{$\{\vallambda\}$}, thus 
$\sum_{i\in I}v_i(\delta') \le \sum_{i\in I}v_i(\valdelta)$.
This is a contradiction.
\end{proof}

Now we come to the next step of the algorithm. Recall that
$(\valdelta,\vallambda)$ is an optimal solution to \masterMICP.
Let $(\p,\mu)$ be an optimal solution to \priceLP. 
If the objective value is zero, then $(\valdelta,\p)$ is an
optimal solution to \rejectionMPEC\ and we are finished.
Otherwise we notice that $\vallambda\notin\Lambda^\circ$, thus
our set $\val\Lambda$ is to big. The previous proposition shows
that we do not cut of any feasible solution to \masterMICP\ if we
remove $\vallambda$ from $\val\Lambda$. Therefore, we set
$\val\Lambda\gets\val\Lambda\setminus\{\vallambda\}$, go back to
the first step and use the modified set $\val\Lambda$. If there 
exists a competitive equilibrium the algorithm finds it in the
first iteration. Otherwise it finds a \emph{bid selection}
$\vallambda$, such that all \emph{selected} mixed integer bids 
(i.e., bids with $\vallambda_i=1$) and all convex bids are profit 
maximizing. All \emph{rejected} mixed integer bids (i.e., bis with 
$\vallambda_i=0$) are not executed at all ($\valdelta_i=0$).
The algorithm terminates after a finite number of steps, because
$\val\Lambda$ is finite and the size of the set decreases in each step.
The procedure is summarized in Algorithm \ref{alg exact bid cut}.

\begin{algorithm}[thp]
\begin{algorithmic}
\medskip
\REQUIRE Instance of a mixed integer auction
\ENSURE Optimal solution to \rejectionMPEC
\STATE
\STATE $\val\Lambda \gets \bin^{I\nonconvex}$
\STATE $done \gets \FALSE$

\WHILE{$\neg done$}
    \STATE $(\valdelta,\vallambda) \gets$ solve \masterMICP
    \STATE $(\valp,\val\mu) \gets$ solve parameterized model \priceLP
\medskip
    \IF {$\left(
           \sum_{i\in I\nonconvex} {\val\mu_i}\tp(A_i\valdelta_i-\vallambda_i a_i) = 0
          \right)$,}
        \STATE {\it// there exist prices such that no bid incurs a loss}
        \STATE $done\gets\TRUE$
    \ELSE
        \STATE {\it// current solution is infeasible, reject it}
        \STATE $\val\Lambda\gets\val\Lambda\setminus\{\vallambda\}$
    \ENDIF 
\medskip
\ENDWHILE
\medskip
\STATE {\bf return} $(\valdelta,\valp)$
\medskip
\end{algorithmic}
\caption{Exact algorithm for mixed integer auctions}
\label{alg exact bid cut}
\end{algorithm}

The step $\val\Lambda\gets\val\Lambda\setminus\{\vallambda\}$ can be
implemented by adding the following cut to the model:
\[
    \sum_{i\in I\nonconvex:\vallambda_i=0}\lambda_i
    +\sum_{i\in I\nonconvex:\vallambda_i=1}(1-\lambda_i)\ge 1.
\]
The exact algorithm should be combined with a heuristic. We obtain a fast 
heuristic if we use a more aggressive cut instead of the previous one.
Let $L$ be the set of mixed integer bids that incur a loss in the current
iteration, then a heuristic cut is given by
\begin{align*}
    \sum_{i\in L}\lambda_i\le |L|-1,&&\text{where}&&
    L:=\{i\in I\nonconvex\mid {\val\mu_i}\tp(A_i\valdelta_i-\vallambda_i a_i) < 0\}.
\end{align*}

\subsection{Model B: Non-Negative Surplus}
\label{sec non-negative surplus}

In Model A, non-convex bids are either surplus maximizing or rejected.
This is a very mild relaxation of a competitive equilibrium.
However, if we consider non-trivial mixed integer bids, the model may imply certain diseconomies.

\begin{example}\label{ex uc order}
A participant $i$ has a production plant. If the plant is used, then on 
the one hand start-up costs of $s=\EUR{30}$ arise and on the other 
hand marginal costs of $m=\EUR{10}$/unit arise for each produced unit.
The variable $\delta_{i,1}\in\bin$ models whether the 
plant is used or not and $\delta_{i,2}\in[0,u]$ models the 
produced quantity which is limited to $u=50$ units.
The price parameterized optimization problem is as follows:
\begin{align*}
\left(\matr{
\hfill\max\quad&v_i(\delta_i)-\p\tp f_i(\delta_i)\hfill\\
\hfill\st\quad&\delta_i\in D_i\hfill
}\right)=
\left(\matr{
\hfill\max\quad&-(s,m)\tp\delta_{i}-\p\tp(-1)\delta_{i,2}\hfill\\
\hfill\st\quad&\delta_{i,1}\in\bin\hfill\\
        &0\le\delta_{i,2}\le u\delta_{i,1}\hfill
}\right)\tag{SellBid)($\p$}\label{ex uc sell bid},
\end{align*}
where $\p\in\R$ is the exogenously given price.
Recall that negative quantities denote production. For this
reason the objective contains the additional minus signs.
The producer receives $-{\p}\tp f_i(\delta_i)=\p\delta_{i,2}$ Euro for the production of
$\delta_{i,2}$ units. The production costs are covered if 
\begin{align}
 v_i(\delta_i) - {\p}\tp f_i(\delta_i)\ge0.
\qquad\Leftrightarrow\qquad
\p\delta_{i,2}-s\delta_{i,1}-m\delta_{i,2}\ge 0\label{ex uc order eq1}
\end{align}
If $\delta_{i,2}>0$, this boils down to $\p\ge m+s/\delta_{i,2}=10+30/\delta_{i,2}$.
If $\delta_{i,2}=0$, the inequality yields $\delta_{i,1}=0$.
The blue area and the blue line in Figure \ref{fig:uc:bid:example} depict the 
price-quantity combinations $(\p,\delta_{i,2})$ where inequality \eqref{ex uc order eq1}
holds.
In Model A the decision variable $\delta_i$ of a mixed integer bid either maximizes
the individual optimization problem or it is zero, that is, 
\begin{align}
\delta_i\in\arg\max\{v_i(\delta_i') - {\p}\tp f_i(\delta_i')
\mid \delta_i'\in D_i\}\cup\{0\}.\label{ex uc order eq2}
\end{align}
The blue lines in Figure \ref{fig:uc:bid:example:block} depict the price-quantity
combinations that satisfy equation this equation. We can see that the blue area
in this Figure is significantly smaller than in the previous Figure.
\begin{figure}[tbp]
\centering\includegraphics[width=0.7\textwidth]{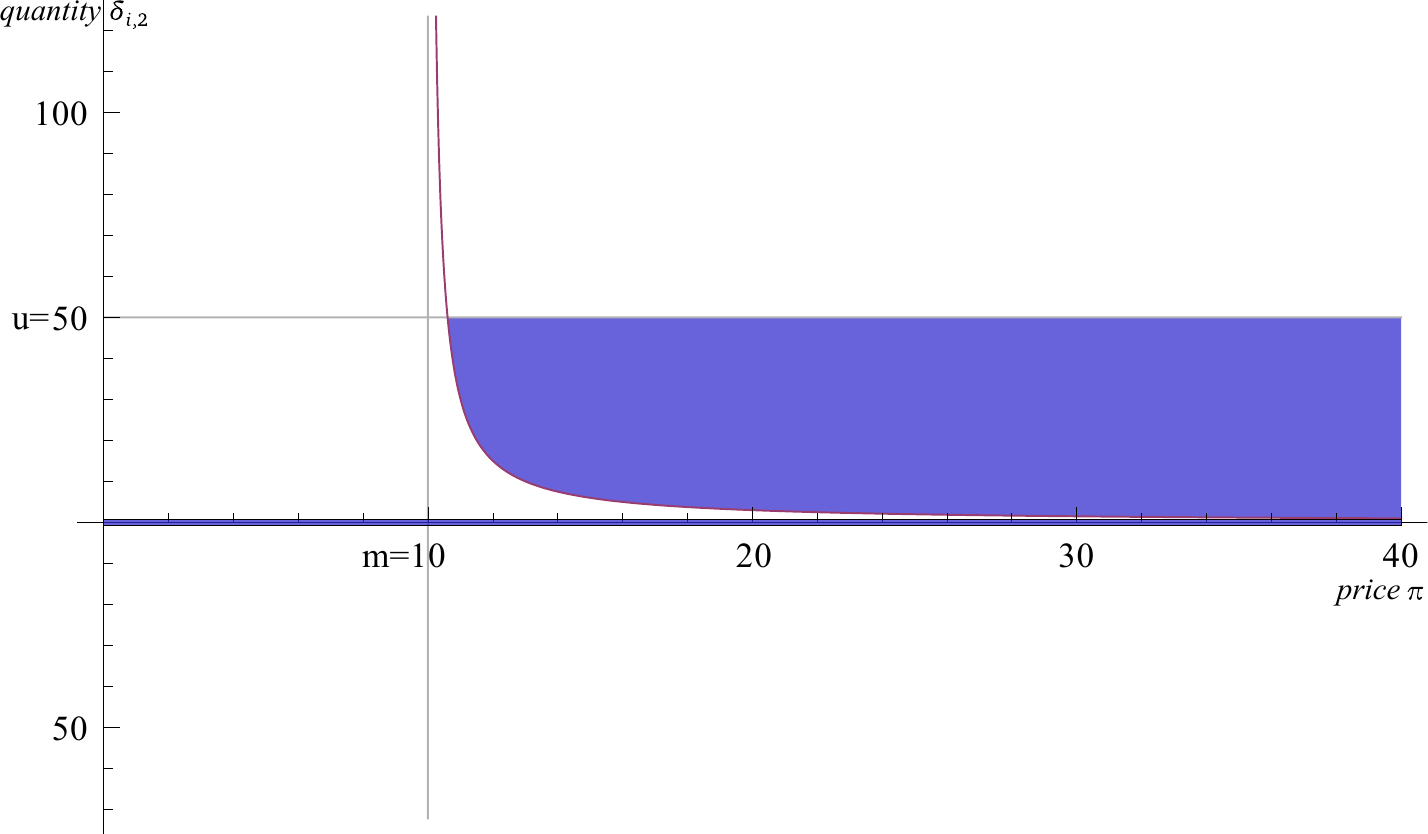}
\caption{thick line and marked area: price-quantity combinations with a non-negative surplus.}
\label{fig:uc:bid:example}
\end{figure}
\begin{figure}[tbp]
\centering\includegraphics[width=0.7\textwidth]{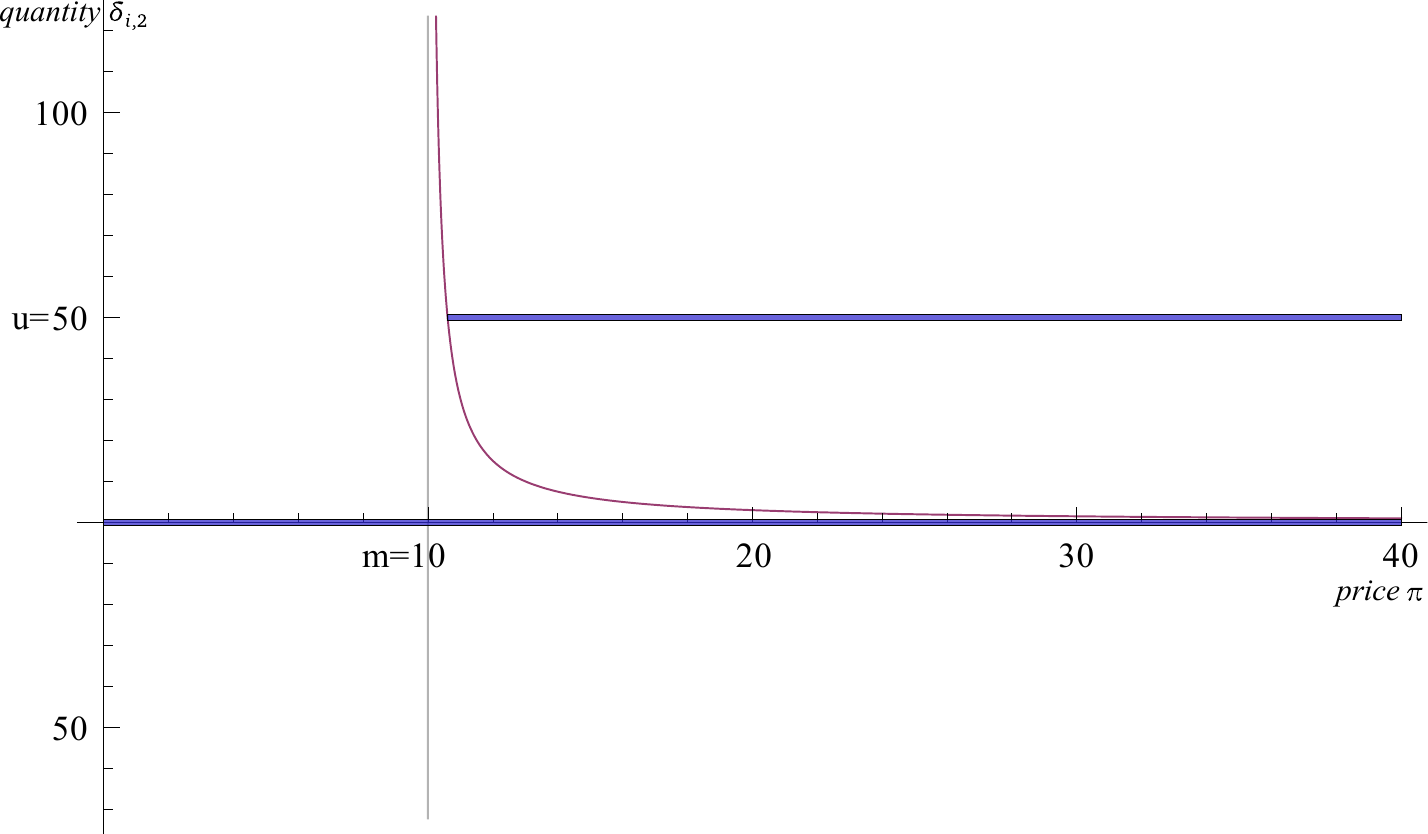}
\caption{thick lines:
quantities are either zero or maximize \eqref{ex uc sell bid}.}
\label{fig:uc:bid:example:block}
\end{figure}

Now assume that there is a convex demand bid $j$. The marginal willingness to pay
amounts to $\EUR{20}$/unit and the maximal demanded quantity amounts to $40$ units.
The individual optimization problem is as follows:
\begin{align*}
\left(\matr{
\hfill\max\quad&v_j(\delta_j)-\p\tp f_j(\delta_j)\hfill\\
\hfill\st\quad&\delta_j\in D_j\hfill
}\right)=
\left(\matr{
\hfill\max\quad&20\delta_j-\p\tp\delta_j\hfill\\
\hfill\st\quad&\delta_j\in[0,40]\hfill
}\right)\tag{BuyBid)($\p$}\label{ex uc buy bid},
\end{align*}
In Model A the sell bid either produces nothing or 50 units (cf. Figure 
\ref{fig:uc:bid:example:block}). Therefore, the buy bid cannot be executed,
because buys at most 40 units. Model A ends up with an economic surplus of
zero Euro. If we replace the equation \eqref{ex uc order eq2} by the less
restrictive constraint \eqref{ex uc order eq1}, the following solution becomes
feasible: $\delta_i=(1,40)$, $\delta_j=40$, and $\p=15$. The solution
generates an economic surplus of $20\cdot40-30-10\cdot40=370$ Euro. The sell bid $i$
has a non-negative surplus of $15\cdot40-30-10\cdot40=170$ Euro and the decision 
variable $\delta_i$ is feasible for \eqref{ex uc sell bid}. The buy bid $j$ has 
a surplus of $20\cdot40-15\cdot40=200$ Euro and the decision variable $\delta_j$ 
maximizes \eqref{ex uc buy bid}.
\end{example}

The example shows that it may be disadvantageous to enforce constraint
\eqref{ex uc order eq2}. This is done in Model A, which computes the trivial
zero solution in the previous example. No bid is executed, even though there
exists a solution where both participants have a strictly positive surplus.
This is not desirable. To address this issue, one 
can consider the following relaxation of \rejectionMPEC. 
\begin{align}
\max\quad
&\sum_{i\in I}v_i(\delta_i),\label{model b eq0}\\
\st\quad
&\sum_{i\in I}f_{i,t}(\delta_i)=0
    &&\forall\hours,\\
&\delta_i\in\arg\max\{v_i(\delta_i') - {\p}\tp f_i(\delta_i')
\mid \delta_i'\in D_i\}
    &&\forall i\in I\convex,\label{model b eq2}\\
&\delta_i\in D_i
\quad\und\quad
v_i(\delta_i) - {\p}\tp f_i(\delta_i) \ge 0
    &&\forall i\in I\nonconvex.\label{model b eq3}
\end{align}
\newcommand\hybridMPEC{\eqref{model b eq0}-\eqref{model b eq3}}
This model ensures that the decision variables of each convex bid maximize the
individual surplus and the decision variables of each non-convex bid are
feasible and the associated surplus is non-negative. Similar to constraint
\eqref{ex uc order eq2} in model \rejectionMPEC, constraint \eqref{model b eq2}
may lead to unfavorable solutions. Model \eqref{model b eq0}-\eqref{model b eq3}
may compute the trivial zero solution, even if there exists a solution 
where all participants have a strictly positive surplus.
These considerations lead to a further relaxation of \rejectionMPEC.
\begin{align*}
\max\quad
&\sum_{i\in I}v_i(\delta_i),\tag{NoLoss}\label{non-negative}\\
\st\quad
&\sum_{i\in I}f_{i,t}(\delta_i)=0
    &&\forall\hours,\\
&\delta_i\in D_i
\quad\und\quad
v_i(\delta_i) - {\p}\tp f_i(\delta_i) \ge 0
    &&\forall i\in I.
\end{align*}
\newcommand\nonnegativeMPEC{\eqref{non-negative}}%
The model does not distinguish between convex and non-convex bids. It 
maximizes the economic surplus subject to the clearing condition,
the feasibility of the decision variables, and the guarantee that
no participant incurs a loss. Recall that $\delta$ and $\p$ are the
variables of the model. Therefore, the non-negative surplus constraint
is non-convex, thus it is difficult to handle.

In the following paragraph we show that the diseconomies described above do not
arise in the model \nonnegativeMPEC. Therefore, we introduce an efficiency term
that characterizes economically desirable solutions and takes 
the strict linear prices into account.
\begin{defn}
A tuple $(\valdelta,\valp)$ is an \emph{efficient} solution to
the multicriteria optimization problem
\begin{align*}
\max\quad
&\vect{v_1(\delta_1)-\p\tp f_1(\delta_1)\\
         \vdots\\
         v_{|I|}(\delta_{|I|})-\p\tp f_{|I|}(\delta_{|I|})}
    \tag{Multi}\label{Multi}\\
\st\quad
&\sum_{i\in I}f_{i,t}(\delta_i)=0
    &&\forall\hours,\\
&\delta_i\in D_i
    &&\forall i\in I,\\
&\p_t\in\R
    &&\forall\hours
\end{align*}
if there is no other feasible solution $(\delta,\p)$ to \eqref{Multi} with
\begin{align}
 v_i(\delta_i)-\p\tp f_i(\delta_i)
  &\ge v_i(\valdelta_i)-{\valp}\tp f_i(\valdelta_i)
  &&\text{for all }i\in I\und
  \label{eq efficient 1}\\
 v_i(\delta_i)-\p\tp f_i(\delta_i)
  &> v_i(\valdelta_i)-{\valp}\tp f_i(\valdelta_i)
  &&\text{for some }i\in I
  \label{eq efficient 2}.
\end{align}
In other words, $(\valdelta,\valp)$ is an \emph{efficient} solution to
\eqref{Multi} if the surplus of a participant cannot be increased without
decreasing the surplus of another participant.
\end{defn}

\begin{prop}
An optimal solution to \nonnegativeMPEC\ is an efficient solution to \eqref{Multi}.
\end{prop}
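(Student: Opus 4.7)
The plan is to reduce efficiency for \eqref{Multi} to objective-value comparison for \nonnegativeMPEC\ by exploiting the clearing condition. Concretely, whenever $(\delta,\p)$ is feasible for \eqref{Multi}, the clearing condition $\sum_i f_i(\delta_i)=0$ gives
\[
\sum_{i\in I}\bigl(v_i(\delta_i)-\p\tp f_i(\delta_i)\bigr)=\sum_{i\in I}v_i(\delta_i)-\p\tp\sum_{i\in I}f_i(\delta_i)=\sum_{i\in I}v_i(\delta_i),
\]
so the objective of \nonnegativeMPEC\ coincides with the sum of individual surpluses along any \eqref{Multi}-feasible point, regardless of the price vector $\p$.

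I would then argue by contradiction. Assume $(\valdelta,\valp)$ is optimal for \nonnegativeMPEC\ but not efficient for \eqref{Multi}. Then there exists a feasible $(\delta,\p)$ for \eqref{Multi} satisfying \eqref{eq efficient 1} and \eqref{eq efficient 2}. The main step is to verify that $(\delta,\p)$ is actually feasible for \nonnegativeMPEC. Clearing and the constraints $\delta_i\in D_i$ hold because $(\delta,\p)$ is \eqref{Multi}-feasible. The non-trivial part is the non-negative surplus condition: here I use that $(\valdelta,\valp)$ is itself \nonnegativeMPEC-feasible, so $v_i(\valdelta_i)-{\valp}\tp f_i(\valdelta_i)\ge 0$ for every $i\in I$; combining with \eqref{eq efficient 1} yields $v_i(\delta_i)-\p\tp f_i(\delta_i)\ge 0$ for every $i$, as required.

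Having established feasibility, I sum \eqref{eq efficient 1}–\eqref{eq efficient 2} over $i\in I$, obtaining
\[
\sum_{i\in I}\bigl(v_i(\delta_i)-\p\tp f_i(\delta_i)\bigr)>\sum_{i\in I}\bigl(v_i(\valdelta_i)-{\valp}\tp f_i(\valdelta_i)\bigr),
\]
since at least one inequality is strict and the rest are weak. By the opening identity, both sides collapse to $\sum_i v_i(\delta_i)$ and $\sum_i v_i(\valdelta_i)$ respectively, so $(\delta,\p)$ attains a strictly larger \nonnegativeMPEC-objective than $(\valdelta,\valp)$, contradicting optimality. The only real obstacle is the feasibility check in the second step; everything else is bookkeeping with the clearing condition.
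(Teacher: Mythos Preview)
Your proof is correct and follows essentially the same approach as the paper: argue by contradiction, use \eqref{eq efficient 1} together with the non-negative surplus of $(\valdelta,\valp)$ to show the dominating point $(\delta,\p)$ is feasible for \nonnegativeMPEC, then sum the inequalities and collapse via the clearing condition to contradict optimality. The only cosmetic difference is that you state the clearing-condition identity up front, whereas the paper applies it at the end.
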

\begin{proof}
Let $(\valdelta,\valp)$ be an optimal solution to \nonnegativeMPEC.
At this solution the surplus of each participant $i\in I$ is non-negative:
$v_i(\valdelta_i)-{\valp}\tp f_i(\valdelta_i)\ge0$.
Assume that there is a feasible solution $(\delta,\p)$ to \eqref{Multi} with
\eqref{eq efficient 1} and \eqref{eq efficient 2}.
Equation \eqref{eq efficient 1} yields
\[
v_i(\delta_i)-\p\tp f_i(\delta_i)
  \ge v_i(\valdelta_i)-{\valp}\tp f_i(\valdelta_i)\ge0
  \qquad\text{for all }i\in I.
\]
In other words, $(\delta,\p)$ is also a feasible solution to \nonnegativeMPEC.
We will now compare the objective values of both solutions.
\[
\sum_{i\in I}v_i(\delta_i)=
  \sum_{i\in I}\left(v_i(\delta_i)-\p\tp f_i(\delta_i)\right)
  \explain{>}{\eqref{eq efficient 1}\und\eqref{eq efficient 2}}
  \sum_{i\in I}\left(v_i(\valdelta_i)-{\valp}\tp f_i(\valdelta_i)\right)=
  \sum_{i\in I}v_i(\valdelta_i).
\]
This is a contradiction, because $(\valdelta,\valp)$ maximizes \nonnegativeMPEC.
The objective value of $(\delta,\p)$ cannot be strictly greater than
the objective value of $(\valdelta,\valp)$.
\end{proof}

An optimal solution to \nonnegativeMPEC\ is an allocation that maximizes
the economic surplus such that there exists a strict linear pricing 
schedule where no participant incurs a loss. It is not possible to
increase the surplus of one participant without decreasing the surplus of
another participant. In particular any allocation with a higher economic 
surplus requires the use of another pricing schedule, because otherwise
at least one participant incurs a loss. Due to these properties
we call an optimal solution to \nonnegativeMPEC\ a \emph{strict linear 
price equilibrium}.

\subsection{Read-World Application}\label{sec real-world application}
The previous models and algorithms are motivated by a 
real-world application. In our previous paper (\cite{mmp2013}) we modeled the 
European day-ahead electricity auction and developed an exact algorithm and a
heuristic for this specific auction. The electricity auction model is 
similar to model \hybridMPEC.
Even though the algorithms in \cite{mmp2013} are designed to solve the specific 
electricity auction problem, they can be generalized. The generalized algorithms 
can solve Model A for arbitrary mixed integer auctions (cf. Algorithm 
\ref{alg exact bid cut} in Section \ref{sec maximization or rejection}).
Note that in general Model A differs from \hybridMPEC, and 
therefore, does not reflect all requirements of the electricity auction.
However, the specialized algorithms in \cite{mmp2013} are similar to the
ones in this paper. They are applicable to large scale real-world electricity
auctions and produce high quality solutions in a short time.

\section{Summary and Outlook}

The article presents two relaxations of a competitive equilibrium in general auctions.
For the first relaxation \rejectionMPEC\, we constructed an exact algorithm and
a heuristic. Both algorithms are applicable to mixed integer auctions.
Even though the first relaxation has desirable algorithmic properties, we showed that
an optimal solution might not be efficient from an economical point of view: 
there exist situations, where all participants can be made better off. Such an 
improvement can be achieved if we replace all optimality conditions by 
the condition that no participant may incur a loss. This leads us to the second
relaxation \nonnegativeMPEC, which maximizes the economic surplus
such that no participant incurs a loss. A solution to this model turns out to be
efficient: no participant can be made better off without making another one worse off.
The second relaxation is economically preferable, but the algorithms for the first
relaxation, only provide heuristic solutions to the second relaxation.
Further research could be concerned about algorithms that can solve large scale
mixed integer auction instances of the second relaxation.

\section*{Acknowledgements}
We want to thank Alexander Martin and Deutsche Börse Systems for supporting this
work. We also want to thank our colleagues for the discussions about the topic.

\begin{appendix}
\setcounter{equation}{0}
\renewcommand{\theequation}{A.\arabic{equation}}
\renewcommand{\thesection}{\appendixname\ \Alph{section}.}

\section{Convex Optimization}

In this section we shortly introduce some basic definitions and results
concerning convex optimization. Consider the convex optimization problem
\begin{align}
\max\quad&v(x),\label{convex prob eq 1}\\
\st\quad&f_i(x)=0&&i=1,\dots,k,\label{convex prob eq 2}\\
	&g_i(x)\leq 0&&i=1,\dots,m,\label{convex prob eq 3}
\end{align}%
\newcommand\convexprob{\eqref{convex prob eq 1}-\eqref{convex prob eq 3}}%
where $v,f_i,g_i:\R^n\rightarrow\R$ are differentiable functions, $v$ concave,
$g_i$ convex, and $f_i$ affine. To describe optimal solutions to this problem
it is convenient to assume that the weak Slater assumption holds.
\cite{hiriart1993convex} provide the following definition:

\begin{defn}
The constraints \eqref{convex prob eq 2}-\eqref{convex prob eq 3} satisfy the
\emph{weak Slater assumption} if there is a feasible point at which all the
non-affine constraints are strictly satisfied, i.e., there exists $\hat x\in\R^n$ with
\begin{align*}
f_i(\hat x)& = 0&&i=1,\dots,k,\\
g_i(\hat x)&\le0&&i=1,\dots,m\text{ and $g_i$ is affine},\\
g_i(\hat x)& < 0&&i=1,\dots,m\text{ and $g_i$ is non-affine}.
\end{align*}
\end{defn}

With this assumption the \emph{Karush-Kuhn-Tucker} conditions, short KKT conditions,
are necessary and sufficient to describe an optimal solution to our convex problem:

\begin{thm}[KKT for differentiable convex problems]
\label{satz:kkt:cont:konvex}
Let $v,f_i,g_i:\R^n\rightarrow\R$ be differentiable functions, $v$ concave,
$g_i$ convex, and $f_i$ affine. If for $\bar x,\bar\p,\bar\lambda$ the \emph{KKT conditions}
\begin{align}
f_i(\bar x)&=0&&i=1,\dots,k,\\
g_i(\bar x)&\leq0&&i=1,\dots,m,\\
\sum_{i=1}^{k}\bar\p_i \frac{\partial f_i}{\partial x_j}(\bar x)
+\sum_{i=1}^{m}\bar\lambda_i \frac{\partial g_i}{\partial x_j}(\bar x)
&=\frac{\partial v}{\partial x_j}(\bar x)&&j=1,\dots,n,\label{eq cp dual 1}\\
\bar\lambda_i&\geq0&&i=1,\dots,m,\label{eq cp dual 2}\\
\bar\lambda_i g_i(\bar x)&=0&&i=1,\dots,m,\label{eq cp dual 3}
\end{align}
hold, then $\bar x$ is primal optimal and $(\bar\p,\bar\lambda)$ dual optimal for
\convexprob. If $\bar x$ is primal optimal and the weak Slater assumption holds, 
then there exists $(\bar\p,\bar\lambda)$ such that the KKT conditions are satisfied.
\end{thm}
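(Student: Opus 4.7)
The plan is to prove the two directions of the KKT theorem separately. The sufficiency direction (KKT conditions imply primal/dual optimality) follows directly from convexity and the gradient characterizations, whereas the necessity direction (primal optimality forces the existence of multipliers) requires the weak Slater assumption and a separation argument.

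For sufficiency, I would fix an arbitrary feasible point $x$ and the candidate KKT point $\bar x$. Concavity of $v$ yields
\[
v(x) - v(\bar x) \le \nabla v(\bar x)^\top (x - \bar x).
\]
Substituting the stationarity condition \eqref{eq cp dual 1} rewrites the right-hand side as $\sum_{i=1}^k \bar\p_i \nabla f_i(\bar x)^\top(x-\bar x) + \sum_{i=1}^m \bar\lambda_i \nabla g_i(\bar x)^\top(x-\bar x)$. Since each $f_i$ is affine, $\nabla f_i(\bar x)^\top(x-\bar x) = f_i(x) - f_i(\bar x) = 0$ by feasibility of both $x$ and $\bar x$. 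Since each $g_i$ is convex, the gradient inequality gives $\nabla g_i(\bar x)^\top(x-\bar x) \le g_i(x) - g_i(\bar x) \le -g_i(\bar x)$, again using feasibility of $x$. Multiplying by $\bar\lambda_i \ge 0$ and invoking complementary slackness $\bar\lambda_i g_i(\bar x) = 0$ makes each term non-positive. Hence $v(x) \le v(\bar x)$ for every feasible $x$, so $\bar x$ is primal optimal; dual optimality of $(\bar\p,\bar\lambda)$ then follows from weak duality together with the fact that the Lagrangian at $(\bar x,\bar\p,\bar\lambda)$ equals $v(\bar x)$ by complementary slackness.

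For necessity, I would apply the standard convex separation approach. Consider the convex set
\[
S := \bigl\{(t,u,w)\in\R\times\R^m\times\R^k : \exists\,x\in\R^n,\ v(x)-v(\bar x)\ge t,\ g_i(x)\le u_i,\ f_i(x)=w_i\bigr\}.
\]
Convexity of $S$ follows from concavity of $v$, convexity of the $g_i$, and affinity of the $f_i$. Primal optimality of $\bar x$ means that no point $(t,0,0)$ with $t>0$ lies in $S$, so a supporting hyperplane at the origin produces a nonzero multiplier vector $(\alpha,\bar\lambda,\bar\p)$ with $\alpha,\bar\lambda\ge 0$. Translating the separation inequality into first-order language at $\bar x$ yields precisely \eqref{eq cp dual 1}, \eqref{eq cp dual 2}, and \eqref{eq cp dual 3} once $\alpha$ is normalized to $1$.

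The main obstacle is the normalization $\alpha\ne 0$, and this is exactly where the weak Slater assumption enters. The weak form requires strict satisfaction only for the non-affine $g_i$, so the delicate point is handling the affine $f_i$ and affine $g_i$ without strict feasibility. The cleanest route is to first restrict the problem to the affine subspace cut out by the affine equalities and affine inequalities, on which the remaining non-affine $g_i$ admit a strictly feasible point by hypothesis; standard strict Slater then applies on this reduced problem and gives $\alpha\ne 0$. Lifting the multipliers back produces the full $(\bar\p,\bar\lambda)$, completing the proof. Since this is a classical result, an acceptable alternative is to cite the reference \cite{hiriart1993convex} already used in the paper.
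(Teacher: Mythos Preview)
Your argument is correct. The sufficiency direction is the standard first-order proof and is carried out cleanly; the necessity direction via separation of the value set $S$ is also standard, and you correctly identify that the weak Slater assumption is precisely what forces the multiplier $\alpha$ on the objective component to be nonzero, with the reduction to the affine subspace being the usual device for handling affine constraints without strict feasibility.

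The paper, however, does not prove this theorem at all: its entire proof consists of the reference ``Confer Theorem VII.2.1.4 and VII.2.2.5 in \cite{hiriart1993convex} or Chapter 5.5.3 in \cite{boyd04}.'' So your approach is genuinely different in that you supply a self-contained argument where the paper simply cites the literature. Your version has the advantage of making the paper independent of external sources for this appendix result and of exposing exactly where weak Slater is used; the paper's version has the advantage of brevity, which is appropriate since the theorem is a classical textbook result and the appendix only serves to fix notation and state the facts used in the body. You already anticipated this possibility in your final sentence, so either route is acceptable here.
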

\begin{proof}
Confer Theorem VII.2.1.4 and VII.2.2.5 in \cite{hiriart1993convex}
or Chapter 5.5.3 in \cite{boyd04}.
\end{proof}

\begin{prop}\label{prop optimal multipliers}
Let $v,f_i,g_i:\R^n\rightarrow\R$ be differentiable functions, $v$ concave,
$g_i$ convex, and $f_i$ affine.
Let $\bar x$ and $\bar x'$ be optimal solutions to \convexprob\ and let 
$M(\bar x):=\{(\bar\p,\bar\lambda)\mid \eqref{eq cp dual 1}-\eqref{eq cp dual 3}\}$.
Then $M(\bar x)=M(\bar x')$.
\end{prop}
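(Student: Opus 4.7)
The plan is to show the inclusion $M(\bar x)\subseteq M(\bar x')$; the other direction then follows by symmetry. The essential tool is the Lagrangian
\[
L(x,\pi,\lambda):=v(x)-\sum_{i=1}^k\pi_i f_i(x)-\sum_{i=1}^m\lambda_i g_i(x),
\]
which, for fixed $\pi\in\R^k$ and $\lambda\ge 0$, is a concave differentiable function of $x$ (since $v$ is concave, $f_i$ is affine and each $g_i$ is convex).

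First I would take an arbitrary $(\bar\pi,\bar\lambda)\in M(\bar x)$ and observe that the stationarity condition \eqref{eq cp dual 1} is exactly $\nabla_x L(\bar x,\bar\pi,\bar\lambda)=0$. Because $L(\cdot,\bar\pi,\bar\lambda)$ is concave and differentiable, this implies $\bar x\in\arg\max_x L(x,\bar\pi,\bar\lambda)$. Using primal feasibility $f_i(\bar x)=0$ and complementary slackness $\bar\lambda_i g_i(\bar x)=0$, the max value simplifies to
\[
\max_x L(x,\bar\pi,\bar\lambda)=L(\bar x,\bar\pi,\bar\lambda)=v(\bar x).
\]

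Next I would sandwich $L(\bar x',\bar\pi,\bar\lambda)$ between $v(\bar x')$ and $v(\bar x)$. Primal feasibility of $\bar x'$ gives $f_i(\bar x')=0$ and $g_i(\bar x')\le 0$, and since $\bar\lambda\ge 0$ one has $-\sum\bar\lambda_i g_i(\bar x')\ge 0$, so
\[
L(\bar x',\bar\pi,\bar\lambda)=v(\bar x')-\sum_{i=1}^m\bar\lambda_i g_i(\bar x')\ge v(\bar x')=v(\bar x),
\]
the last equality because $\bar x$ and $\bar x'$ are both primal optimal. Combined with $L(\bar x',\bar\pi,\bar\lambda)\le\max_x L(x,\bar\pi,\bar\lambda)=v(\bar x)$ from the previous step, all inequalities are equalities. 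Two consequences fall out at once: the nonpositive terms $\bar\lambda_i g_i(\bar x')\le 0$ must sum to zero, hence $\bar\lambda_i g_i(\bar x')=0$ for every $i$ (complementary slackness at $\bar x'$); and $\bar x'$ itself attains the maximum of $L(\cdot,\bar\pi,\bar\lambda)$, so by concavity and differentiability $\nabla_x L(\bar x',\bar\pi,\bar\lambda)=0$, i.e., the stationarity condition \eqref{eq cp dual 1} holds at $\bar x'$.

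Together with the feasibility of $\bar x'$ and the unchanged condition $\bar\lambda\ge 0$, this gives $(\bar\pi,\bar\lambda)\in M(\bar x')$, so $M(\bar x)\subseteq M(\bar x')$; swapping the roles of $\bar x$ and $\bar x'$ yields the reverse inclusion. The only non-mechanical step is the sandwich argument that upgrades the inequalities into equalities, and this rests solely on the common optimal value $v(\bar x)=v(\bar x')$, so no extra qualification beyond what is already stated is needed.
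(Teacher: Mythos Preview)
Your argument is correct: the Lagrangian sandwich forces both complementary slackness and stationarity at $\bar x'$, and symmetry finishes the job. The paper does not actually prove this proposition---it simply defers to Proposition~VII.3.3.1 in \cite{hiriart1993convex}---so what you have written is a complete, self-contained proof where the paper gives none; your saddle-point argument is in fact the standard one underlying the cited reference.
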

\begin{proof}
Confer Proposition VII.3.3.1 in \cite{hiriart1993convex}.
\end{proof}

\end{appendix}

\urlstyle{same}
\renewcommand\path[1]{#1}
\bibliographystyle{model5-names}
\bibliography{literatur}

\end{document}